\theoremstyle{plain}
\newtheorem{theorem}{Theorem}[section]
\newtheorem{proposition}[theorem]{Proposition}
\newtheorem{lemma}[theorem]{Lemma}
\newtheorem{corollary}[theorem]{Corollary}
\theoremstyle{definition}
\newcommand{\C}{{\mathbb{C}}}
\newcommand{\R}{{\mathbb{R}}}
\newcommand{\Z}{{\mathbb{Z}}}
\newcommand{\N}{{\mathbb{N}}}
\newcommand{\Q}{{\mathbb{Q}}}
\newcommand{\br}{\mathbf{r}}
\renewcommand{\O}{{\mathcal{O}}}
\newcommand{\height}{\operatorname{H}}
\newcommand{\e}{{\varepsilon}}
\newcommand{\SL}{\operatorname{SL}}
\newcommand{\Mat}{\operatorname{Mat}}
\newcommand{\bad}{\operatorname{Bad}}
\newcommand{\Res}{\operatorname{Res}}
\newcommand{\Image}{\operatorname{Im}}
\newcommand{\Id}{\operatorname{Id}}
\begin{document}
\title{Badly approximable vectors, $C^{1}$ curves and number fields}

\begin{abstract}
We show that the set of points on $C^{1}$ curves which are badly approximable by rationals in a number field form a winning set in the sense of W. Schmidt. As a consequence, we obtain a number field version of Schmidt's conjecture in Diophantine approximation. 
\end{abstract}

\subjclass[2010]{11J83, 11K60, 37D40, 37A17, 22E40} \keywords{Diophantine Approximation, Schmidt Game, Number Fields}

\author{Manfred Einsiedler}
\address{ETH Z\"{u}rich, Departement Mathematik, R\"{a}mistrasse 101 8092, Z\"{u}rich Switzerland}\email{manfred.einsiedler@math.ethz.ch, beverly.lytle@math.ethz.ch}
\author{Anish Ghosh}
\thanks {This work was partly supported by a Royal Society International Grant. The first
and last named authors acknowledge the support of SNF-grant 200021-127145.}
\address{School of Mathematics, University of East Anglia, Norwich, NR4 7TJ UK}\email{ghosh.anish@gmail.com}
\author{Beverly Lytle}

\maketitle
\tableofcontents

\section{Introduction}

Recall that a real number $x$ is \emph{badly approximable} if there exists $c > 0$ such that
\begin{equation}\label{eq:bad1}
|qx - p| > \frac{c}{q}
\end{equation}
\noindent for all $q \in \N$ and $p \in \Z$. It is well known that badly approximable vectors have zero Lebesgue measure and full Hausdorff dimension (Jarnik \cite{J} for $n=1$ and Schmidt \cite{S1, S3} for arbitrary $n$). In fact, Schmidt showed that they are \emph{winning} for a certain game, a stronger and more versatile property than having full Hausdorff dimension. 

\subsection{Schmidt's Game}

In \cite{S1}, Schmidt introduced the following game.  Two players, say Player A and Player B, start with a complete metric space $X$, a subset $W\subseteq X$, and two parameters $0<\alpha, \beta<1$.  Player A begins by choosing an arbitrary ball $A_0=B(x_0,\rho)$.  The Player B then chooses a ball $B_0=B(y_1,\alpha\rho)$ contained in $A_0$.  Player A makes his next move by choosing a ball $A_1 \subset B_0$ of radius $\alpha\beta\rho$.  The $n$th step of the game consists of first the Player A choosing a ball $A_n = B(x_n, (\alpha\beta)^n\rho) \subset B_{n-1}$ and Player B following by choosing the next ball $B_n=B(y_n,\alpha(\alpha\beta)^n\rho) \subset A_n$.  As the radii of the balls are shrinking to zero and $X$ is complete, at the end of the infinite game, Player A and Player B are left with a single point $\{x_{\infty}\} =\bigcap_n A_n$.  We say that Player B has won this $(\alpha,\beta)$ game if $x_{\infty} \in W$. The set $W$ is called $(\alpha, \beta)$-winning if Player B can find a winning strategy, $\alpha$-winning if it is $(\alpha, \beta)$-winning for all $0 < \beta < 1$ and winning if it is $\alpha$-winning for some $\alpha > 0$. Schmidt games have the following properties (cf. \cite{S1}, \cite{Dani3}):
\begin{enumerate}
\item A winning subset of $\R^n$ is \emph{thick}, i.e.\ the intersection of a winning set with every open set in $\R^n$ has Hausdorff dimension $n$.\\
\item A countable intersection of $\alpha$-winning sets is $\alpha$-winning.\\
\item Winning sets are preserved by bi-Lipschitz homeomorphisms of $\R^n$.\\
\item The set of badly approximable vectors is $(\alpha, \beta)$-winning whenever $2\alpha < 1 + \alpha\beta$; in particular, it is $\alpha$-winning for any $0 < \alpha \leq 1/2$.
\end{enumerate}

\subsection{Diophantine approximation in number fields} 
 
Let $K$ be a number field of degree $d$ with $r$ real and $s$ complex embeddings.  Denote by $S$ the set of Galois embeddings $\sigma$, where for the complex embeddings, one chooses one of the pair $\sigma$ and $\bar{\sigma}$.   Let $\O_K$ be the ring of integers of $K$.  Denote by $K_S:=\R^r\times\C^s \cong \R^d$.  We denote by $\tau$ the twisted diagonal embedding of $K$ into $K_S$ by
$$\tau(x) = (\sigma(x))_{\sigma\in S},$$
\noindent where we identify each coordinate of $K_S$ with an element of $S$. The notation will be extended to vectors and matrices and~$\tau$ will be omitted in the notation when it causes no confusion.\\

It is natural to ask if analogues of the traditional theorems in Diophantine approximation hold in the setting of number fields. More precisely, we wish to approximate elements in $K_S$ using ratios of elements in $\O_K$. Analogues of Dirichlet's theorem in this setting have been established by several authors (cf. \cite{S4}, \cite{B}, \cite{Q}, \cite{H}) using appropriate adaptations of the geometry of numbers. Moreover, \cite{B} and \cite{H} also show the existence of badly approximable vectors\footnote{We note, however, that our notion of badly approximable vectors differs slightly
from the notion considered elsewhere as we do not square the absolute value at the complex places in our definition.} in this setting.\\ 

Say that a vector ${\bf x} = (x^{\sigma})_{\sigma\in S} \in K_S$ is $K$-badly approximable (${\bf x} \in \bad(K)$) if there exists $c>0$ such that for all $p, q\in \O_K$ with $q\neq 0$ 
\begin{equation}\label{eq:bad}
\max_{\sigma\in S}\{|\sigma(p)+x^{\sigma}\sigma(q)|\}\max_{\sigma\in S}\{|\sigma(q)|\} > c.
\end{equation}

\noindent Here and in the rest of the paper, $|~|$ will be used to denote both real and complex absolute values depending on context. 

S.G. Dani \cite{Dani1} showed that a real number is badly approximable if and only if a lattice associated to the number has a bounded trajectory in the space $\SL_2(\Z) \backslash \SL_2(\R)$ under the action of a certain subsemigroup.
A version of the Dani correspondence (\S \ref{Dani}) states that a vector ${\bf x}$ is $K$-badly approximable if and only if the trajectory 
\begin{equation}\nonumber
\left\{ \SL_2(\O_K) \left(\begin{pmatrix}1& x^{\sigma} \\ &1  \end{pmatrix}\right)_{\sigma\in S}  g_t \,|\, t\geq 0  \right\}
\end{equation}
\noindent for the flow
\begin{equation}\label{eq:diag}
g_{t} := \left\{ \left(\begin{pmatrix} e^{-t}& \\ &e^{t} \end{pmatrix} \right)_{\sigma \in S} : t \geq 0 \right\}
\end{equation}
\noindent is bounded in the quotient space $\SL_2(\O_K) \backslash \SL_2(K_S)$. In conjunction with the Moore ergodicity theorem, we can conclude that $K$-badly approximable vectors have zero Lebesgue measure. Nevertheless, they constitute a winning set for Schmidt's game and therefore have full Hausdorff dimension.     

\subsection{Main Results}
 
\noindent We show that the set of badly approximable vectors are winning even when the game is restricted to a curve.  We need slightly separate conditions on the curve in question in different cases. We recall that $ r+s$ is the number of
simple factors of~$\SL_2(K_S)$.  

\begin{theorem}\label{thm:main1}
Let $\phi=(\phi_{\sigma})_{\sigma \in S}:[0,1] \to K_S$ be a continuously differentiable map. We assume that  $\phi'_{\sigma}(x) \neq 0$ for all but finitely many $x \in [0,1]$ and for all $\sigma$ in a subset $S' \subset S$ (possibly depending on~$x$) with 
$$
 |\{\sigma\in S':\sigma\textrm{ is real}\}|+ 2|\{\sigma\in S':\sigma\textrm{ is complex}\}|   > \lfloor \frac{d}{2}\rfloor.
$$
\noindent Define
\begin{equation}\label{eq:Phi}
\Phi(x):=\left(\begin{pmatrix} 1&\phi_{\sigma}(x) \\ &1 \end{pmatrix} \right)_{\sigma\in S},
\end{equation} and let $g_t$ as defined in (\ref{eq:diag}) act on $\SL_2(\O_K) \backslash \SL_2(K_S)$ by right multiplication.  
Let $\Lambda= \SL_2(\O_K)g\in \SL_2(\O_K) \backslash \SL_2(K_S)$.  
Then the set
\begin{equation}
\{x\in[0,1]\, |\,  \Lambda\Phi(x)\text{ has bounded trajectory under the flow } g_t\}
\end{equation}
is winning in the sense of Schmidt, and hence has Hausdorff dimension $1$.  
\end{theorem}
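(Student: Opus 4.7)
The plan is to play Schmidt's game directly on the parameter interval $[0,1]$, with Player B following a strategy that forces the $g_t$-orbit of $\Lambda\Phi(x_\infty)$ to remain bounded in $\SL_2(\O_K)\backslash \SL_2(K_S)$.

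First, I would invoke the Dani correspondence from \S\ref{Dani} together with Mahler's compactness criterion to reformulate boundedness: there must exist $\delta>0$ such that for every $t\geq 0$, no nonzero vector of the $\O_K$-lattice associated to $\Lambda\Phi(x)g_t$ has ``height'' below $\delta$. Fix $\alpha\in(0,\tfrac{1}{2}]$ to be chosen and let $\beta\in(0,1)$ be arbitrary, so that Player A's balls $A_n$ have radii $r_n=(\alpha\beta)^n r_0$. Pick discrete times $T_n=n\log((\alpha\beta)^{-1})+c_0$, calibrated so that $g_{T_n}$ expands the contracted direction from scale $r_n$ up to unit scale. Player B's inductive strategy during round $n$ is to pick $B_n\subset A_n$ avoiding the preimage in $A_n$ of the shallow-cusp region in $\SL_2(\O_K)\backslash \SL_2(K_S)$ under the map $x\mapsto\Lambda\Phi(x)g_{T_n}$.

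Second, I would enumerate the ``dangerous'' pairs $(p,q)\in\O_K^2\setminus\{0\}$ whose image under right multiplication by $\Phi(x)g_{T_n}$ could drop below height $\delta$ for some $x\in A_n$. The contracting/expanding $e^{\pm t}$-structure of $g_t$ on each simple factor of $\SL_2(K_S)$, together with the product form of the height appearing in~\eqref{eq:bad}, bounds the number of such pairs by a constant independent of $n$. For each dangerous pair, the subset of $x\in A_n$ where the short-vector event actually occurs is governed by the map $x\mapsto\bigl(\sigma(p)+\phi_\sigma(x)\sigma(q)\bigr)_{\sigma\in S}$; by the derivative hypothesis, this map moves at nonzero speed in a real subspace of $K_S$ of dimension $|\{\sigma\in S':\sigma\text{ real}\}|+2|\{\sigma\in S':\sigma\text{ complex}\}|$ at all but finitely many $x$. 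A mean-value estimate then bounds the length of each dangerous sub-interval of $A_n$ by a constant multiple of $\delta r_n$, so that for $\delta$ small enough (depending only on $\alpha$ and the $C^1$-data of $\phi$) the union of dangerous intervals is smaller than the $2\alpha r_n$-window available to Player B, who can therefore place $B_n$ to avoid them all. The finitely many exceptional $x$ where $\phi'_\sigma$ vanishes throughout $S'$ are handled by a standard local modification of the strategy.

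The main obstacle I expect is matching the product-of-maxima form of the height against the partial control the derivative gives: short-vector events involve all factors of $K_S$, but $\phi$ is only guaranteed to move in the $S'$-directions. One must show that any Diophantine savings obtainable from the $(S\setminus S')$-components are strictly outweighed by the loss forced along the $(S')$-directions of the curve. This is precisely where the numerical hypothesis $|\{\sigma\in S':\sigma\text{ real}\}|+2|\{\sigma\in S':\sigma\text{ complex}\}|>\lfloor d/2\rfloor$ enters, making the effective real dimension of the curve's motion exceed half the dimension of $K_S$; once packaged as a Minkowski-type bound in $K_S^2$, the counting estimate in the second paragraph closes, and winning follows for any sufficiently small~$\alpha$.
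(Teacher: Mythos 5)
Your framework (playing the game on the parameter interval, converting boundedness via Mahler/Dani into a lower bound on the heights of lattice vectors, and calibrating times $t_n$ so that $e^{-2t_n}\asymp r_n$) is the same as the paper's, but the two quantitative claims on which your avoidance strategy rests are exactly the points that need proof, and one of them is false as stated. First, the assertion that only boundedly many pairs $(p,q)$ are dangerous in round $n$ is the crux of the whole argument, and you give no reason for it. The paper's substitute is Lemma~\ref{lem:onespan}: if two nonzero vectors $v,w$ of the module satisfy $\height(v)\height(w)<1$, then $\prod_\sigma|\det(v^\sigma,w^\sigma)|_\sigma=|N_{K|\Q}(a_1b_2-a_2b_1)|$ is a nonnegative integer less than $1$, hence zero, so $v$ and $w$ are $K$-proportional. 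This integrality of the norm form (the $\Q$-rank one phenomenon) is what reduces the danger to a single $K$-line per round; without it the union of dangerous intervals has no a priori bound.

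Second, the bound ``each dangerous sub-interval has length $O(\delta r_n)$'' does not follow from a mean-value estimate in the form you state it. For a fixed pair $(p,q)$ the condition $|\sigma(p)+\phi_\sigma(x)\sigma(q)|<\eta$ at a single place $\sigma\in S'$ cuts out an interval of length $\asymp\eta/|\sigma(q)|$, and an individual conjugate $\sigma(q)$ can be arbitrarily small compared with $\max_\sigma|\sigma(q)|$; equivalently, a lattice vector lying close to the contracted eigenspace at the places of $S'$ stays short for many consecutive rounds, and its dangerous set can be all of $A_n$. The paper deals with this by normalizing place by place: it shows Player B can force the expanding coordinate of the (unique) short direction to satisfy $|(v\Phi_n(x))_2^\sigma|\geq\e_\sigma\|v^\sigma\|$ for each $\sigma$ with $\phi'_\sigma\neq0$, choosing the left or right $\alpha$-subinterval according to the sign of $\mathrm{Re}(v_2^\sigma/v_1^\sigma)$ and resolving disagreements between places by a voting procedure spread over a bounded number $m$ of rounds. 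Only after this steering does the height estimate close, and it closes precisely because the hypothesis $k:=|\{\sigma\in S'\ \textrm{real}\}|+2|\{\sigma\in S'\ \textrm{complex}\}|>\lfloor d/2\rfloor$ makes the product $\prod_\sigma\|(vg_t)^\sigma\|_\sigma$ grow like $e^{(2k-d)t}$ even if every place outside $S'$ is contracted forever. Your final paragraph correctly identifies that this counting hypothesis must enter, but the mechanism (expanding places outweighing contracting ones in the product defining $\height$) is the content of the proof, not a packaging step, and it is not supplied. The reduction of the $C^1$ case to the linear one via two-sided bounds $m_\sigma x\leq\phi_\sigma(x_n+x)-\phi_\sigma(x_n)\leq M_\sigma x$ after finitely many initial rounds is fine and matches the paper.
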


We note that the condition on the curve in the theorem above is not of a technical nature. If~$\Phi$ simply
parametrizes a line segment that is parallel a coordinate axis (which are special directions as they correspond
to the simple factors of the ambient Lie group), then there may not be any points with bounded trajectory on the 
line segment if e.g.~$d=r=3$, see~\S \ref{counterex}. 

Theorem \ref{thm:main1} coupled with Dani's correspondence gives us:
\begin{corollary}\label{cor:main1}
Let $\phi: [0,1] \to K_S$ be as in Theorem \ref{thm:main1}. Then the set 
\begin{equation}\nonumber
\{x\in [0,1]\,|\, \phi(x) \in \bad(K)\}
\end{equation}
is winning in the sense of Schmidt, and hence has Hausdorff dimension $1$.  
\end{corollary}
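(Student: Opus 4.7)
The plan is to deduce the corollary directly from Theorem~\ref{thm:main1} together with the Dani correspondence stated in the introduction (and to be established in~\S\ref{Dani}). That correspondence says that a vector $\mathbf{x}\in K_S$ lies in $\bad(K)$ if and only if the forward trajectory $\{\SL_2(\O_K)\,U_{\mathbf{x}}\,g_t:t\geq 0\}$ is bounded in $\SL_2(\O_K)\backslash\SL_2(K_S)$, where $U_{\mathbf{x}}$ denotes the unipotent matrix attached to $\mathbf{x}$ as in~(\ref{eq:Phi}).

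First I would specialize this equivalence pointwise along the curve. For each $x\in[0,1]$ the matrix $U_{\phi(x)}$ is exactly the $\Phi(x)$ of~(\ref{eq:Phi}), so
\[
\{x\in[0,1]:\phi(x)\in\bad(K)\}=\{x\in[0,1]:\SL_2(\O_K)\,\Phi(x)\,g_t\text{ is bounded as }t\to\infty\}.
\]

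Second, I would invoke Theorem~\ref{thm:main1} with the trivial base point, i.e.\ with $\Lambda=\SL_2(\O_K)\cdot e$ (so $g=I$). The differentiability and non-vanishing hypothesis imposed on $\phi$ in the corollary is precisely the hypothesis of the theorem, so Theorem~\ref{thm:main1} delivers that the set on the right hand side above is winning for Schmidt's game on $[0,1]$. By property~(1) of Schmidt games recorded in the introduction, any winning subset of a Euclidean space is thick, which on the one-dimensional interval $[0,1]$ is the same as having Hausdorff dimension~$1$.

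There is no substantive obstacle inside the corollary itself; all the work is packaged into Theorem~\ref{thm:main1} and the number-field Dani correspondence. The only verifications needed are that the identity coset $g=I$ is an admissible starting point in Theorem~\ref{thm:main1} (which it plainly is, since the theorem is stated for arbitrary $\Lambda$) and that the inequality~(\ref{eq:bad}) in the definition of $\bad(K)$ translates correctly into boundedness of the $g_t$-orbit, this last point being exactly the content of~\S\ref{Dani}.
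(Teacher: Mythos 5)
Your argument is correct and is exactly the paper's intended proof: the authors deduce the corollary by combining Theorem~\ref{thm:main1} (applied with the base point $g=\Id$, so that $\Lambda\Phi(x)$ has bounded $g_t$-trajectory precisely when $\phi(x)\in\bad(K)$ by Proposition~\ref{prop:dani}) with the fact that winning sets in $[0,1]$ have Hausdorff dimension $1$. No discrepancies to report.
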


 Now we let $K$ be a real quadratic extension of $\Q$ where~$d=r=2$. In this case, we can choose
different directions in the two-dimensional Cartan subgroup. 

\begin{theorem}\label{thm:main2}
	Let~$K$ be a real quadratic field. 
Let $\phi=(\phi_{\sigma})_{\sigma \in S}:[0,1] \to K_S$ be a continuously differentiable map such that $\phi'_{\sigma}(x) \neq 0$ for all but finitely many $x \in [0,1]$ and every $\sigma \in S$. Let ${\bf r} \in \R^2$ be a real vector with $r_{\sigma} \geq 0$ for $\sigma \in S$ and $\sum_{\sigma}r_{\sigma} = 1$.  For $t \geq 0$, let 

\begin{equation}\label{eq:defdiag}
g(\br)_{t} := \left( \begin{pmatrix} e^{-r_{\sigma} t}& \\ &e^{r_{\sigma} t} \end{pmatrix} \right)_{\sigma \in S}
\end{equation}
act on $\SL_2(\O_K) \backslash \SL_2(K_S)$ by right multiplication.  Let $\Lambda= \SL_2(\O_K)g\in \SL_2(\O_K) \backslash \SL_2(K_S)$.  
Then the set
\begin{equation}
\{x\in[0,1]\, |\,  \Lambda\Phi(x)\text{ has bounded trajectory under the flow } g(\br)_t\}
\end{equation}
is winning in the sense of Schmidt, and hence has Hausdorff dimension $1$.  
\end{theorem}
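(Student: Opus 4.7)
The plan is to adapt the proof of Theorem~\ref{thm:main1} to the direction-dependent flow $g(\br)_t$; the substantive new content lies in the boundary of the simplex of directions, where some $r_\sigma=0$ and the corresponding simple factor is not expanded.

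Via the Dani correspondence (cf.~\S\ref{Dani}) applied to $g(\br)_t$, boundedness of the orbit $\{\Lambda\Phi(x)g(\br)_t\}_{t\geq 0}$ is equivalent to the existence of $\epsilon>0$ such that
\begin{equation*}
\max_{\sigma\in S}\max\bigl(e^{-r_\sigma t}|\sigma(p)+\phi_\sigma(x)\sigma(q)|,\; e^{r_\sigma t}|\sigma(q)|\bigr)\geq \epsilon
\end{equation*}
for all $(p,q)\in\O_K^2\setminus\{(0,0)\}$ and all $t\geq 0$. Optimizing in $t$ yields a $\br$-weighted bad-approximation condition on $\phi(x)$, whose failure set is, at each height of $(p,q)$, a finite union of short intervals in $[0,1]$.

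To play Schmidt's game, fix small parameters $\alpha,\beta>0$ and partition pairs $(p,q)\in\O_K^2$ by a $\br$-adapted height $H_\br(q)$ on $q$ (for instance, $H_\br(q)=\prod_\sigma |\sigma(q)|^{r_\sigma}$ or a suitable weighted maximum). At each height level, the non-vanishing of $\phi'_\sigma(x)$ for every $\sigma\in S$ provides transversality of the curve to the bad intervals and yields an upper bound on the width of each interval in terms of $H_\br(q)$. Player B then plays to avoid, at each stage, all bad intervals of heights up to the current scale; for an appropriate choice of $\alpha$ and $\beta$, the total width of bad intervals inside Player A's ball stays below Player B's move radius, and an inductive geometric-series argument completes the proof exactly as in Theorem~\ref{thm:main1}.

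The main obstacle is the boundary case, say $\br=(1,0)$, where only the $\sigma_1$-factor expands. Here the relevant $(p,q)$ must satisfy the two time-independent constraints $|\sigma_2(q)|<\epsilon$ and $|\sigma_2(p)+\phi_{\sigma_2}(x)\sigma_2(q)|<\epsilon$. The first, together with $|N(q)|\geq 1$ for nonzero $q\in\O_K$, forces $|\sigma_1(q)|>1/\epsilon$, so the contributing rationals are sparse in the expanding factor; the second, combined with the hypothesis $\phi'_{\sigma_2}(x)\neq 0$, cuts a small transversal slab in $[0,1]$. A two-parameter partition of the rationals, by both $|\sigma_1(q)|$ and the slab index in the $\sigma_2$-direction, followed by a careful balance of the widths of intervals obtained from the two reductions, forms the technical heart of this case; the hypothesis that $\phi'_\sigma\neq 0$ for every $\sigma$ is precisely what allows transversality in both the expanding and the rigid directions.
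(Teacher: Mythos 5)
Your proposal diverges from the paper's route: the paper never passes back to a Diophantine covering argument, but stays in the dynamical picture and runs a ``dodge the short vector'' strategy. At stage $n$ it takes $t_n=\frac{1}{2r}\log\frac{1}{\rho(\alpha\beta)^n}$ with $r=\max_\sigma r_\sigma$, and Player B chooses the leftmost or rightmost subinterval so that the perturbed short vector $v\Phi_n(x)$ acquires a definite expanding component at the place $\sigma_0$ of maximal weight; since that place carries the fastest dynamics, the height of $v\Phi_n(x)g(\br)_t$ recovers uniformly. In particular the boundary case $\br=(1,0)$, which you single out as the main obstacle, needs no separate treatment in that scheme.

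The genuine gap in your argument is the absence of any counting or separation statement for the bad intervals. To justify ``the total width of bad intervals inside Player A's ball stays below Player B's move radius'' you must control not only the width of each interval but \emph{how many} intervals of the relevant height range can meet a ball of radius $\rho(\alpha\beta)^n$; the union over all heights of all bad intervals has full measure in $[0,1]$, so no global measure estimate can substitute for a scale-by-scale multiplicity bound. In the paper this is precisely the role of Lemma~\ref{lem:onespan}: two nonzero vectors $v,w$ of the module with $\height(v)\height(w)<1$ are $K$-proportional, because $|N_{K|\Q}(a_1b_2-a_2b_1)|$ is a nonnegative integer dominated by $\height(v)\height(w)$. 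This $\Q$-rank one input guarantees that at each stage there is at most one $K$-line of dangerous vectors, so Player B only ever dodges a single direction. Your sketch never establishes such a bound, and it is needed for \emph{every} $\beta<1$ (winning demands a fixed $\alpha$ that works even for $\beta$ close to $1$, where purely measure-theoretic avoidance breaks down). Finally, the portion you yourself describe as ``the technical heart'' of the boundary case is only announced, not carried out. As it stands the proposal is therefore incomplete.
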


\noindent The $\alpha$ in the winning statement above does not depend on $\br$. Taking intersections over rational vectors $\br$ and using the fact that countable intersections of $\alpha$-winning sets are $\alpha$-winning therefore shows us that

\begin{corollary}\label{cor:main2}
With notation as in Theorem \ref{thm:main2}, 
\begin{equation}
\bigcap_{\br \in \Q^{2}_{+}}\left\{x\in[0,1]\, |\,  \Lambda\Phi(x)\text{ has bounded trajectory under the flow } g(\br)_t\right\}
\end{equation}
\noindent is winning.
\end{corollary}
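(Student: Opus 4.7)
The plan is to reduce the corollary to Theorem~\ref{thm:main2} together with property~(2) of Schmidt's game, namely that a countable intersection of $\alpha$-winning sets is again $\alpha$-winning. For each admissible rational $\br$ (satisfying $r_\sigma \geq 0$ and $r_1+r_2=1$), Theorem~\ref{thm:main2} supplies a value $\alpha(\br)>0$ for which the set
\[
W_\br := \{x\in[0,1] \mid \Lambda\Phi(x)\text{ has bounded trajectory under }g(\br)_t\}
\]
is $\alpha(\br)$-winning. The essential point, already singled out in the paragraph preceding the corollary, is that $\alpha(\br)$ may in fact be taken equal to a single constant $\alpha_0>0$, independent of~$\br$.

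Once this uniformity is granted, the conclusion is immediate. The admissible rational $\br$ are parametrised by $\Q\cap[0,1]$ via $r_1 \mapsto (r_1,1-r_1)$, and hence form a countable set. Property~(2) of Schmidt's game then yields that $\bigcap_{\br\in\Q^2_+} W_\br$ is $\alpha_0$-winning, as claimed.

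The only substantive task is therefore verifying that $\alpha_0$ can be chosen uniformly in~$\br$, and this is where I would expect the main (and essentially the only) obstacle to lie. I would establish it by revisiting the proof of Theorem~\ref{thm:main2} and tracking the dependence of Player~B's winning parameter on the diagonal flow: since $\br$ ranges over the compact simplex $\{(r_1,r_2)\in\R^2_{\geq 0}:r_1+r_2=1\}$ and enters the argument only through the exponential contraction and expansion rates of $g(\br)_t$ on the horospherical subgroups, the winning parameter should depend on the curve $\phi$, bounds on $\phi'$, and the ambient geometry of $\SL_2(\O_K)\backslash\SL_2(K_S)$, but not on the precise direction~$\br$. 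Concretely, I would check that each \emph{quantitative} step in the proof of Theorem~\ref{thm:main2}---the sizes of the protected neighbourhoods, the number of bad balls Player~B must avoid at each stage, and the choice of game scales---can be bounded by quantities continuous in~$\br$ over the simplex, and hence uniformly in~$\br$. Granted that, combining with property~(2) finishes the argument as above.
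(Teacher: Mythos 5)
Your proposal is correct and follows the paper's own argument exactly: the proof of Theorem~\ref{thm:main2} fixes the winning parameter $\alpha$ (any $0<\alpha<1/2$) independently of $\br$, which enters only through the times $t_n$ and the waiting time for expansion, so one intersects the countably many $\alpha$-winning sets $W_\br$ for rational $\br$ and invokes the countable-intersection property of $\alpha$-winning sets. Nothing further is needed.
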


\noindent Remarks:
\begin{enumerate}
\item Corollary \ref{cor:main2} provides an analogue of Schmidt's conjecture for real quadratic fields, a theorem of Badziahin-Pollington-Velani \cite{BPV} in the real case. See also the works \cite{An1, An2} of Jinpeng An for stronger results in this vein.
\item In contrast with Corollary \ref{cor:main2}, we note that it follows from results in \cite{EKL} (see also \cite{EK}) that bounded orbits for the full Cartan subgroup have \emph{zero} Hausdorff dimension.
\item  Using Theorem \ref{thm:main1} and the Marstrand Slicing Theorem, we see that $\bad(K)$ has full Hausdorff dimension.
\item As far as we are aware, the only other result regarding abundance of badly approximable vectors in the context of number fields is for certain quadratic extensions whose rings of integers have unique factorization \cite{EKr}.
\item Theorems \ref{thm:main1}, \ref{thm:main2} and Corollary \ref{cor:main1} are among the very few existing results which show that badly approximable points on curves are winning. In a recent work, V. Beresnevich \cite{Ber} show that badly approximable vectors on ``nondegenerate" manifolds have full Hausdorff dimension.  See also the works \cite{KW1, KW2, KTV} for results regarding badly approximable vectors on certain classes of fractals. 
\item We refer to the $r_i$'s which appear in $g(\br)_t$ as weights. Thus Theorem \ref{thm:main1} deals with equal weights and Theorem \ref{thm:main2} with unequal weights. The equal weights version of our results is closely related to a result from \cite{BFKRW} (see Proposition $4.9$). We note, however, that in the context of this paper certain special directions (e.g. line segments parallel to a coordinate axis corresponding to one of three real factors) may fail to have any badly approximable points on them while line segments in general directions are covered by Theorem~\ref{thm:main1}.
\item   In a related, earlier result \cite{Aravinda}, it is shown that points on $C^1$ curves in rank $1$ locally symmetric spaces which have bounded orbits under the geodesic flow are winning. The result of this paper may be viewed
as a generalization of this result to certain quotients of higher rank groups with~$\Q$-rank 1.

\end{enumerate}

\subsection*{Acknowledgements} AG thanks ETH Z\"urich for hospitality during visits.

\section{Mahler's Compactness Criterion}

In this section, we state and prove Mahler's compactness criterion for $S$-adic homogeneous spaces. Theorem \ref{thm:Mahler}, which is the main result in this section is almost certainly well known to experts. For instance, see \cite{KT1, KT2}. We provide a proof for completeness. The original statement of Mahler's compactness criterion is as follows:

\begin{theorem}
A subset $A \subset \SL_n(\Z)\backslash \SL_n(\R)$ is relatively compact if and only if there exists $\varepsilon >0$ such that for all $\SL_n(\Z)g \in A$ and for all $v \in \Z^n g$, $\Vert v \Vert >\varepsilon$. 
\end{theorem}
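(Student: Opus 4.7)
The argument splits naturally into two directions, both proceeding through the \emph{systole} function $\alpha(\SL_n(\Z)g) := \inf\{\|v\| : v \in \Z^n g \setminus \{0\}\}$, which is well defined on $\SL_n(\Z) \backslash \SL_n(\R)$ since $\Z^n g$ is a unimodular lattice in $\R^n$ and therefore discrete, and since left multiplication by $\SL_n(\Z)$ permutes $\Z^n$.

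For the forward direction I would argue by contradiction. Assume $A$ is relatively compact but $\alpha$ is not bounded below on $A$: choose $g_k$ representing points of $A$ and $v_k \in \Z^n g_k \setminus \{0\}$ with $\|v_k\| \to 0$. By relative compactness, after passing to a subsequence, $\SL_n(\Z) g_k$ converges in the quotient; replacing each $g_k$ by $\gamma_k g_k$ for suitable $\gamma_k \in \SL_n(\Z)$ we may assume $g_k \to g$ in $\SL_n(\R)$, while still $v_k \in \Z^n g_k$ with $\|v_k\| \to 0$. Writing $v_k = w_k g_k$ with $w_k \in \Z^n \setminus \{0\}$ gives $w_k = v_k g_k^{-1} \to 0 \cdot g^{-1} = 0$, contradicting $\|w_k\| \geq 1$. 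This yields the desired $\varepsilon > 0$.

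For the reverse direction, assume $\alpha(\SL_n(\Z)g) \geq \varepsilon$ for every $\SL_n(\Z)g \in A$. I would apply Minkowski's second theorem to the unimodular lattice $\Z^n g$: its successive minima $\lambda_1 \leq \cdots \leq \lambda_n$ (with respect to the Euclidean unit ball) satisfy $\lambda_1 \lambda_2 \cdots \lambda_n \asymp 1$ with implicit constants depending only on $n$. Since $\lambda_1 \geq \alpha(\SL_n(\Z)g) \geq \varepsilon$, each $\lambda_i \geq \varepsilon$, hence $\lambda_n \leq C_n\,\varepsilon^{-(n-1)}$. A standard reduction-of-basis argument then produces a $\Z$-basis $v_1, \ldots, v_n$ of $\Z^n g$ with $\|v_i\| \leq C_n'\,\varepsilon^{-(n-1)}$ for all $i$. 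Equivalently, there exists $\gamma \in \SL_n(\Z)$ with $\gamma g$ having all rows bounded by $C_n'\,\varepsilon^{-(n-1)}$, so $\gamma g$ lies in a fixed bounded (and hence, in $\SL_n(\R)$, relatively compact) subset $K_\varepsilon \subset \SL_n(\R)$. Thus $A$ is contained in the image of $K_\varepsilon$ under the quotient map, giving relative compactness.

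The main obstacle is the passage from successive minima to an honest $\Z$-basis of comparable norm: it is well known that for $n \geq 5$ the vectors realizing the successive minima need not form a $\Z$-basis, but a Mahler/Weyl-type perturbation yields a basis whose norms lie within a dimensional constant of the $\lambda_i$, and this is where the proof has to be written carefully rather than just quoted. In the $S$-adic setting that the paper actually needs for Theorem \ref{thm:Mahler}, the same two steps survive, but Minkowski's second theorem and the reduction of basis must be adapted to the order $\O_K \subset K_S$ and the product norm on $K_S^n$; this is the portion of the work where one must actually do some computation, and where I would expect the remainder of the section to concentrate.
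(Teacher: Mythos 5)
Your argument is correct, but it is worth pointing out that the paper does not prove this statement at all: it quotes it as the classical Mahler compactness criterion and uses it as a black box, the actual content of the section being the deduction of Theorem \ref{thm:Mahler} for $X \cong \SL_2(\O_K)\backslash\SL_2(K_S)$ by embedding that space into $\SL_{2d}(\Z)\backslash\SL_{2d}(\R)$ via restriction of scalars and then comparing the sup-norm with the height $\height$ by means of Dirichlet's unit theorem (Lemma \ref{lem:height2}). Your two directions are the standard proof and both go through: for the forward direction, lifting a convergent sequence in the quotient to a convergent sequence of representatives is legitimate because the projection is open, and then $w_k = v_k g_k^{-1} \to 0$ indeed contradicts $\|w_k\| \geq 1$ for nonzero integer vectors; for the reverse direction, Minkowski's second theorem gives $\lambda_n \leq C_n \varepsilon^{-(n-1)}$, and the reduction to a genuine $\Z$-basis with norms $O_n(\lambda_n)$ is the classical fact (e.g.\ $\|b_i\| \leq \max\{1,i/2\}\lambda_i$) that you correctly flag as the only step needing care. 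Two small points to tidy: the change of basis a priori lies in $\GL_n(\Z)$, so adjust a sign to land in $\SL_n(\Z)$ (harmless for the norms), and when you conclude that the bounded set $K_\varepsilon$ is relatively compact in $\SL_n(\R)$, say explicitly that its closure in $\Mat_n(\R)$ still consists of determinant-one matrices. Your closing guess about the $S$-adic step is where you diverge from the paper: rather than redoing Minkowski over the order $\O_K$ with the product norm, the paper transfers the problem to $\SL_{2d}(\Z)\backslash\SL_{2d}(\R)$ and only has to control the discrepancy between $\|\cdot\|$ and $\height$, which is exactly what the unit-theorem lemma provides; your route would also work but requires a genuine geometry-of-numbers argument over $K_S$, whereas the paper's buys the criterion for $X$ almost for free from the classical one.
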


Here we take $\Vert v \Vert$ to be the maximum norm.  We wish to rephrase this theorem for the space $\SL_2(\O_K)\backslash \SL_2(K_S)$.  To do this we will use restriction of scalars to map $\SL_2(\O_K)\backslash \SL_2(K_S)$ in $\SL_{2d}(\Z)\backslash \SL_{2d}(\R)$.  More concretely, we define the embedding as follows:

Since the degree of $K$ over $\Q$ is $d$, we may view $K$ as a $d$ dimensional vector space over $\Q$.  We choose a basis $\{a_1,\dots a_d\}$ of $K$ over $\Q$ so that the $\Z$-span of these elements is $\O_K$.  Left multiplication by an element of $K$ is a $\Q$-linear transformation on $K$.  We have an algebraic embedding $\iota:K\to \Mat_{d,d}(\Q)$ with respect to this chosen basis.  We then have an induced map
\begin{align*}
\iota:\SL_2(K) &\to \SL_{2d}(\R) \\
\begin{pmatrix}a&b\\c&d\end{pmatrix} &\mapsto \begin{pmatrix} \iota(a)&\iota(b)\\ \iota(c)&\iota(d)\end{pmatrix}
\end{align*}

Thus, we have defined the algebraic subgroup
$$\Res_{K/\Q}\SL_2= \left\{\begin{pmatrix} A&B\\C&D\end{pmatrix} | A,B,C,D\in \Image(\iota), AD-BC = \Id \right\},$$
such that $\iota(\SL_2(K)) = (\Res_{K/\Q}\SL_2) (\Q) \subseteq \SL_{2d}(\Q).$

To give another idea of the structure of this space, consider the basis of $K$ given by $\{1,\xi,\xi^2,\dots,\xi^{d-1}\}$, where $\xi$ is a primitive element of $K$, that is, $K=\Q(\xi)$.  (Note that the transformation from this basis to the previously mentioned one is rational.)  Then as a $\Q$-linear transformation of $K$, left multiplication by $\xi$ is represented by the companion matrix
\[
T_{\xi}=
\begin{pmatrix} 0&1&0&\cdots&0 \\ 
0&0&1&\cdots&0 \\
\vdots &\vdots &\vdots & \ddots &\vdots \\
-c_0&-c_1&-c_2&\cdots&-c_{d-1} \end{pmatrix}
,
\]
where the $p_{\min}(x) = x^d + c_{d-1}x^{d-1}+ \cdots + c_1 x +c_0$ is the minimal polynomial of $\xi$ over $\Q$.  It is well known that this is conjugated by the Vandermonde matrix  $V_{\xi}$
(associated to the various Galois embeddings of $\xi$) to the diagonal form
\[
V_{\xi}^{-1}T_{\xi}V_{\xi} = \text{diag}(\sigma(\xi))_{\sigma\in S}
\]
Since the elements of $\Image(\iota)$ commute, they are simultaneously diagonalizable.  Moreover for any $\zeta \in K$ there exists $p_{\zeta}(x) \in \Q[x]$ of degree less than $d-1$ with $\zeta = p_{\zeta}(\xi)$, and so the transformation of left multiplication by $\zeta$ is given by $T_{\zeta}=p_{\zeta}(T_{\xi})$ and is conjugate to 
\[
\text{diag}(p_{\zeta}(\sigma(\xi)))_{\sigma\in S} = \text{diag}(\sigma(\zeta))_{\sigma\in S}
\]
\noindent (since the maps $\sigma$ are ring homomorphisms).  After a simple change of bases, we have an embedding $\SL_2(K) \to \SL_{2d}(\R)$ given by
\[
\begin{pmatrix}\zeta_1 &\zeta_2 \\ \zeta_3 & \zeta_4\end{pmatrix} \mapsto
\text{diag}\left(\begin{pmatrix}\sigma(\zeta_1) &\sigma(\zeta_2) \\ \sigma(\zeta_3) & \sigma(\zeta_4)\end{pmatrix}\right)_{\sigma \in S}.
\]
This is precisely $\iota(\SL_2(K))\subset \SL_2(K_S)$, where the latter is sitting in $\SL_{2d}(\R)$ in block diagonal form and $\iota(\SL_2(K))$ forms the $\Q$-points of the variety defined above after conjugation with the appropriate change of basis matrix.   

  As with the identification of $\SL_2(\Z)\backslash\SL_2(\R)$ with the space of covolume 1 lattices in $\R^2$, we have an identification of $\SL_2(\O_K)\backslash\SL_2(K_S)$ with the set, denoted $X$, of discrete (as subsets of $K_S^2$) rank 2 $\O_K$-modules with the property that for each $\Lambda \in X$ there exists a basis $\{v,w\}$ of $\Lambda$ so that for each $\sigma$, $\sigma(v)$ and $\sigma(w)$ form the sides of a parallelepiped of area 1 (in $\R^2$ or $\C^2$, appropriately). Now that we have a proper embedding $\SL_2(\O_K)\backslash\SL_2(K_S) \to \SL_{2d}(\Z)\backslash\SL_{2d}(\R)$, we use Mahler's compactness criterion on the second space to derive the 
statement:

\begin{theorem}\label{thm:Mahler}
A subset $A\subset X$ is relatively compact if and only if there exists $\varepsilon>0$ such that for all $\Lambda\in A$ and for all vectors $v\in \Lambda=\tau(\O_K^2)g$, $\Vert v \Vert > \varepsilon$.  
\end{theorem}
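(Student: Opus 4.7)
The plan is to deduce this theorem from the classical Mahler criterion via the proper embedding
\[
\bar\iota : X = \SL_2(\O_K)\backslash\SL_2(K_S) \longrightarrow \SL_{2d}(\Z)\backslash\SL_{2d}(\R)
\]
induced by the restriction-of-scalars map $\iota$ constructed above (composed, when convenient, with the fixed change of basis conjugating $\iota(\SL_2(K_S))$ to its block-diagonal image inside $\SL_{2d}(\R)$). Under this identification, the class of $g \in \SL_2(K_S)$ maps to the class of $\iota(g)$, which under the usual identification of $\SL_{2d}(\Z)\backslash\SL_{2d}(\R)$ with the space of covolume-$1$ lattices in $\R^{2d}$ corresponds to $\Z^{2d}\iota(g)$. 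The argument has two ingredients: (a) the map $\bar\iota$ is proper, so that relative compactness in $X$ is equivalent to relative compactness of the image; and (b) the norm condition on $\tau(\O_K^2)g \subset K_S^2$ in the statement is equivalent, up to a universal multiplicative constant, to the classical norm condition on the unimodular lattice $\Z^{2d}\iota(g) \subset \R^{2d}$.

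For (a), observe that $\iota(\SL_2(K_S)) \subset \SL_{2d}(\R)$ is the group of real points of the $\Q$-algebraic subgroup $\Res_{K/\Q}\SL_2$ introduced above, and by the choice of $\Z$-basis $\{a_1,\ldots,a_d\}$ of $\O_K$ one has $\iota(\SL_2(K_S)) \cap \SL_{2d}(\Z) = \iota(\SL_2(\O_K))$. The standard fact that the $\SL_{2d}(\Z)$-orbit of a $\Q$-algebraic subgroup is closed in $\SL_{2d}(\R)$ then gives that $\bar\iota$ is a topological embedding onto a closed subset, and hence a proper map between locally compact spaces.

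For (b), the map $\O_K^2 \to \Z^{2d}$ sending $(\alpha,\beta)$ to its coordinate vector with respect to the basis $\{a_1,\ldots,a_d\}$ intertwines the right action of $\iota(g)$ on $\R^{2d}$ with the right action of $g$ on $K_S^2$, once one identifies $K_S^2 \cong \R^{2d}$ via $\tau$. This identification is a fixed $\R$-linear isomorphism (a block-diagonal Vandermonde-type matrix depending only on $K$ and the chosen basis), so the max-norms on the two sides are Lipschitz-equivalent. Consequently there is a constant $c > 0$, independent of $g$, such that
\[
\inf_{0 \neq v \in \tau(\O_K^2)g}\|v\| > \varepsilon \quad\Longleftrightarrow\quad \inf_{0 \neq w \in \Z^{2d}\iota(g)}\|w\| > c\varepsilon.
\]
Combining (a) and (b) with the classical Mahler criterion applied on $\SL_{2d}(\Z)\backslash\SL_{2d}(\R)$ yields the stated equivalence. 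The main obstacle is the closedness assertion in (a), a standard consequence of reduction theory for arithmetic groups; once it is granted, the rest is a routine translation of the hypotheses through the linear isomorphisms set up in the paragraphs preceding the theorem.
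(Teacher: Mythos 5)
Your proposal is correct and follows essentially the same route as the paper, which simply invokes the restriction-of-scalars embedding $\SL_2(\O_K)\backslash\SL_2(K_S)\hookrightarrow\SL_{2d}(\Z)\backslash\SL_{2d}(\R)$ together with the classical Mahler criterion; you merely supply the details (properness via closedness of orbits of $\Q$-subgroups, and Lipschitz-equivalence of the norms under the fixed linear identification $K_S^2\cong\R^{2d}$) that the paper leaves implicit. The only cosmetic point is that your displayed equivalence should be read as asserting two-sided comparability of the infima up to fixed constants, rather than a single constant $c$ working verbatim in both directions.
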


For a vector $v$ in $K_S^2$, denote by $v^{\sigma}$ the projection of $v$ onto the factor associated with the embedding $\sigma$.  We define a height function $\height : K_S^2 \to \R$ by
$$\height(v) := \prod_{\sigma}\Vert v^\sigma\Vert_\sigma=\prod_{\sigma\textrm{ real}} \Vert v^{\sigma} \Vert \prod_{\sigma\textrm{ complex}} \Vert v^{\sigma} \Vert^2,
$$
where we write~$\Vert \cdot\Vert_{\sigma}$ for the norm respectively the square of the norm depending on whether
the place~$\sigma$ is real or complex.
 It will be useful to think of this height function as a measure of depth into the cusp.  We wish to say that a set $A$ is relatively compact if and only if the height function is uniformly bounded below by a positive constant over all $\O_K$-modules in $A$.  To prove this statement, we first need some properties of the function $H$.

\begin{lemma}\label{lem:height1}
Let $\Lambda=\tau(\O_K)g\in X$ and $v \in \Lambda\setminus \{0\}$.  Then
\begin{enumerate}
\item $\height(v)\neq 0$
\item for $\xi \in \O_K^{\times}$, $\height(\xi v)= \height(v)$.
\end{enumerate}
\end{lemma}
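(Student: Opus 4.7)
The plan is to verify both parts by checking one Galois place at a time and then combining, using injectivity of embeddings for (1) and the product formula for the norm for (2).

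For part (1), I would write any nonzero $v \in \Lambda$ as $v = \tau(a,b)\,g$ with $(a,b) \in \O_K^2 \setminus \{0\}$, so that at each place $\sigma \in S$ the projection is
\begin{equation*}
v^{\sigma} = (\sigma(a),\sigma(b))\,g^{\sigma},
\end{equation*}
with $g^{\sigma} \in \SL_2(K_{\sigma})$ invertible. Since $\sigma : K \to \R$ or $\C$ is a field embedding and hence injective on $K$, the vector $(\sigma(a),\sigma(b))$ vanishes only when $(a,b)=0$. Therefore $v^\sigma \neq 0$ at each $\sigma$, so $\|v^{\sigma}\|_{\sigma} > 0$ for every $\sigma$, and $\height(v) = \prod_{\sigma} \|v^{\sigma}\|_{\sigma} > 0$.

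For part (2), the scalar action of $\xi \in \O_K$ on $K_S^2$ is given componentwise by the embedding $\tau$, so $(\xi v)^{\sigma} = \sigma(\xi)\, v^{\sigma}$. Using homogeneity of the norms at each place and the convention that $\|\cdot\|_{\sigma}$ denotes the squared Euclidean norm at complex places,
\begin{equation*}
\|(\xi v)^{\sigma}\|_{\sigma} = |\sigma(\xi)|_{\sigma}\,\|v^{\sigma}\|_{\sigma},
\end{equation*}
where $|\sigma(\xi)|_{\sigma}$ is $|\sigma(\xi)|$ at real places and $|\sigma(\xi)|^{2}$ at complex places. Taking the product over $\sigma$ yields
\begin{equation*}
\height(\xi v) \;=\; \Bigl(\prod_{\sigma} |\sigma(\xi)|_{\sigma}\Bigr) \height(v) \;=\; |N_{K/\Q}(\xi)|\cdot \height(v).
\end{equation*}
Because $\xi \in \O_K^{\times}$ is a unit, $N_{K/\Q}(\xi) = \pm 1$, so the prefactor equals $1$ and $\height(\xi v) = \height(v)$.

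There is no serious obstacle in either part; the only subtlety worth flagging is keeping the convention straight that $\|\cdot\|_{\sigma}$ squares the absolute value at complex places, which is exactly what makes $\prod_{\sigma}|\sigma(\xi)|_{\sigma}$ agree with the absolute norm $|N_{K/\Q}(\xi)|$ and not with some fractional power of it. Both assertions of the lemma are purely algebraic facts about the $\O_K$-module structure of $\Lambda$ and the compatibility of $\height$ with the diagonal embedding, and will be used later as the basic tools for controlling when a trajectory escapes to the cusp.
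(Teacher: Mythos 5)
Your proof is correct and follows essentially the same route as the paper: part (1) via invertibility of each $g^{\sigma}$ together with injectivity of the embeddings $\sigma$ (the paper phrases it contrapositively), and part (2) via the product formula $\prod_{\sigma\textrm{ real}}|\sigma(\xi)|\prod_{\sigma\textrm{ complex}}|\sigma(\xi)|^{2}=1$ for units, which is exactly your identity $\prod_{\sigma}|\sigma(\xi)|_{\sigma}=|N_{K/\Q}(\xi)|=1$. Your write-up merely spells out the details the paper leaves implicit.
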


\begin{proof}
For the first property, suppose $\height(v)=0$. Then $\Vert v^{\sigma} \Vert = \max\{|v_1^{\sigma}|,|v_2^{\sigma}|\}=0$ for some $\sigma$.  Since $v = \tau(a,b)g$ for $a,b\in \O_K$, and since $g$ is invertible, we have $(\sigma(a),\sigma(b)) = (0,0)$.  Thus, $(a,b) = (0,0)$ and hence $v=0$. The other property follows from the 
product formula~$\prod_{\sigma\textrm{ real}} |\sigma(\xi)|\prod_{\sigma\textrm{ complex}} |\sigma(\xi)|^2=1$ for units~$\xi\in\mathcal{O}^*$.  
\end{proof}

 The following lemma is essentially taken from the preprint \cite{KT1} of Kleinbock and Tomanov.

\begin{lemma}\label{lem:height2}
There exists a constant $C$ such that if $v\in K_S^2$ with $\height(v)\neq 0$ then there exists a unit $\xi$ with 
\begin{equation}\nonumber
C^{-1}\height(v)^{\frac{1}{d}} \leq \Vert \xi v \Vert \leq C\height(v)^{\frac{1}{d}}.
\end{equation}
\end{lemma}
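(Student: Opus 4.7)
The plan is to reduce the statement to a standard application of Dirichlet's unit theorem, using the product formula to produce the correct hyperplane in which to approximate.

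First I would take logarithms everywhere: set $c_\sigma := \log \|v^\sigma\|$ (unsquared at every place), so that $\log\height(v)=\sum_\sigma n_\sigma c_\sigma$, where $n_\sigma=1$ for real~$\sigma$ and $n_\sigma=2$ for complex~$\sigma$, and put $T:=\tfrac1d \log\height(v)$, which is the target scale for $\log\|\xi v\|$. For a unit $\xi\in\O_K^\times$, write $\lambda_\sigma:=\log|\sigma(\xi)|$, so that $\|\xi v\|=\max_\sigma e^{\lambda_\sigma+c_\sigma}$. The goal is therefore to find a unit $\xi$ with $\lambda_\sigma+c_\sigma$ uniformly close to $T$ for every $\sigma\in S$, i.e.\ with $\lambda_\sigma$ close to the target vector $y=(T-c_\sigma)_{\sigma\in S}$.

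Next I would use the product formula for units, $\sum_\sigma n_\sigma \lambda_\sigma = 0$, to identify the correct ambient hyperplane $H:=\{(x_\sigma):\sum_\sigma n_\sigma x_\sigma=0\}\subset\R^{r+s}$. A direct computation gives
\begin{equation*}
\sum_\sigma n_\sigma(T-c_\sigma) = dT-\sum_\sigma n_\sigma c_\sigma = \log\height(v)-\log\height(v)=0,
\end{equation*}
so the target $y$ lies in $H$. By Dirichlet's unit theorem, the image of $\O_K^\times$ under $\xi\mapsto(\lambda_\sigma)_{\sigma\in S}$ is a full-rank lattice $\Gamma\subset H$, and hence there is a constant $C'>0$, depending only on~$K$ (a covering radius for $\Gamma$ with respect to the sup norm on $H$), such that for any $y\in H$ one can find $\xi\in\O_K^\times$ with $|\lambda_\sigma(\xi)-(T-c_\sigma)|\le C'$ for every $\sigma$.

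Finally I would translate this approximation back to the multiplicative side: applying the inequalities coordinate-wise yields
\begin{equation*}
e^{-C'}\height(v)^{1/d} \ \le\ |\sigma(\xi)|\,\|v^\sigma\|\ \le\ e^{C'}\height(v)^{1/d}\qquad\text{for every }\sigma\in S,
\end{equation*}
and taking the maximum over $\sigma$ gives the required two-sided bound on $\|\xi v\|$ with $C=e^{C'}$. The only genuine subtlety is bookkeeping with the factors $n_\sigma$ at complex places (so that the product formula matches the exponent~$d$ in the height and in $\height(v)^{1/d}$); once one sets these up consistently, the conclusion is immediate from cocompactness of $\Gamma$ in $H$.
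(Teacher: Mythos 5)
Your proof is correct and is essentially the paper's argument: both rest on Dirichlet's unit theorem giving cocompactness of the unit group in the norm-one locus, applied to the normalized vector $\bigl(\|v^\sigma\|/\height(v)^{1/d}\bigr)_\sigma$. The only difference is presentational --- you work additively in the log-embedding with the trace-zero hyperplane, while the paper works multiplicatively with the hypersurface $\prod_\sigma (x^\sigma)^{n_\sigma}=1$; these are the same statement under $\exp$.
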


\begin{proof}
Let 
\[
Z = \bigl\{{\bf x} = (x^{\sigma})_{\sigma\in S} \in \R^{r+s}_{>0} | \prod_{\sigma\textrm{ real}} x^{\sigma}
\prod_{\sigma\textrm{ complex}} (x^{\sigma})^2 = 1\bigr\}.
\]
The morphism $\xi \mapsto  (\vert \sigma(\xi)\vert)_{\sigma\in S}$ sends $\O_K^*$ to a subgroup of the multiplicative group $Z$.  By the proof of the Dirichlet Unit Theorem, this is a cocompact lattice.  Thus, there exists a constant $C$ so that for any $(x^{\sigma}) \in Z$ there exists $\xi \in \O_K^*$ with 
\[
C^{-1} \leq \vert \sigma(\xi) \vert x^{\sigma} \leq C.
\]

Let $v \in K_S^2$ with $\height(v)\neq 0$.  Then the vector $\left(\frac{\Vert v^{\sigma} \Vert}{\height(v)^{1\backslash d}}\right)$ is in $Z$.  Applying the previous lemma, we have the claim.
\end{proof}

\begin{proposition}
A subset $A\subset X$ is relatively compact if and only if there exists $\delta>0$ such that for all $\Lambda\in A$ and for all nonzero vectors $v\in \Lambda=\tau(\O_K^2)g$, $\height(v) > \delta$.  
\end{proposition}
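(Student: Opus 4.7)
The plan is to deduce this proposition directly from Theorem \ref{thm:Mahler} by comparing the max norm on $K_S^2\cong \R^{2d}$ with the height function $\height$ on lattice vectors, using Lemmas \ref{lem:height1} and \ref{lem:height2} as the bridge.

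For the direction ($\Leftarrow$), I would first note that if $\|v\|$ denotes the max norm on $K_S^2$, then at each real place $\|v^\sigma\|\leq \|v\|$ and at each complex place $\|v^\sigma\|_\sigma=\|v^\sigma\|^2\leq \|v\|^2$. Multiplying these inequalities over all $\sigma \in S$ gives
$$
\height(v)=\prod_{\sigma\textrm{ real}}\|v^\sigma\|\prod_{\sigma\textrm{ complex}}\|v^\sigma\|^2\leq \|v\|^{r+2s}=\|v\|^d.
$$
Hence a uniform bound $\height(v)>\delta$ for all nonzero $v\in\Lambda\in A$ immediately yields $\|v\|>\delta^{1/d}$, and Theorem \ref{thm:Mahler} then forces $A$ to be relatively compact.

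For the direction ($\Rightarrow$), I would start from Theorem \ref{thm:Mahler}, which supplies $\varepsilon>0$ with $\|v\|>\varepsilon$ for every nonzero $v\in\Lambda\in A$. A naive comparison in this direction fails: even if $\|v\|$ is large, one of the components $v^\sigma$ can be very small, making $\height(v)$ tiny. The fix is to replace $v$ by a unit translate. Given nonzero $v\in\Lambda$, Lemma \ref{lem:height1}(1) ensures $\height(v)\neq 0$, so by Lemma \ref{lem:height2} there is a unit $\xi\in\O_K^\times$ with $\|\xi v\|\leq C\,\height(v)^{1/d}$. Since $\Lambda=\tau(\O_K^2)g$ is an $\O_K$-module and $\xi$ is a unit, $\xi v$ is again a nonzero element of $\Lambda$; Theorem \ref{thm:Mahler} therefore gives $\|\xi v\|>\varepsilon$. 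Combining, $\varepsilon<C\,\height(v)^{1/d}$, and Lemma \ref{lem:height1}(2) guarantees $\height(\xi v)=\height(v)$, so we obtain the desired uniform bound $\height(v)>(\varepsilon/C)^d=:\delta$ for the original vector.

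The only delicate step — and the conceptual heart of the argument — is the use of Lemma \ref{lem:height2} to pass from an arbitrary lattice vector to its best-balanced unit translate before invoking Mahler. Without the action of $\O_K^\times$ and the fact that heights are unit-invariant, one cannot convert the coordinate-wise max norm into the multiplicative height. Everything else in the proof is formal.
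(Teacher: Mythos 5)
Your proof is correct and follows essentially the same route as the paper's (very terse) proof: the direction ``relatively compact $\Rightarrow$ height bounded below'' via Lemma \ref{lem:height2} combined with Theorem \ref{thm:Mahler}, and the converse via the elementary comparison $\height(v)\leq \|v\|^{d}$, which is the quantitative form of the paper's appeal to continuity of $\height$ at $0$. The only superfluous step is the final invocation of Lemma \ref{lem:height1}(2), since Lemma \ref{lem:height2} already bounds $\|\xi v\|$ in terms of $\height(v)$ itself.
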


\begin{proof}
The first implication follows from the  previous lemma along with Theorem  \ref{thm:Mahler}.  The reverse implication is immediate from continuity of~$H$.  
\end{proof}

\section{Dani's Correspondence}\label{Dani}

We prove a version of Dani's correspondence for number fields.  As in the introduction we will consider here
the notion of badly approximable vectors with equal weights, which as we will now show corresponds to the 
dynamics of the flow $g_t$.

\begin{proposition}\label{prop:dani}
A vector ${\bf x} \in K_S$ is $K$-badly approximable, that is, there exists $c>0$ such that for all $p,q \in \O_K$ with $q\neq 0$
\begin{equation}\label{eq:dani1}
\max_{\sigma\in S}\{\vert \sigma(q)x^{\sigma} + \sigma(p)\vert\}\max_{\sigma\in S}\{\vert \sigma(q)\vert\} > c,
\end{equation}
if and only if the trajectory
\begin{equation}\label{eq:dani2}
\left\{ \SL_2(\O_K) \left(\begin{pmatrix}1& x^{\sigma} \\ &1  \end{pmatrix}\right)_{\sigma\in S}  g_t \,|\, t\geq 0  \right\}
\end{equation}
 in the quotient space~$\SL_2(\O_K)\backslash\SL_2(K_S)$ is bounded.  
\end{proposition}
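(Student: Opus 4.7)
The plan is to translate bounded orbit into a uniform height lower bound via the Mahler-type criterion at the end of Section 2, and then to show that this lower bound is equivalent to the Diophantine inequality \eqref{eq:dani1}. A direct computation shows that for $q,p\in\O_K$ the lattice vector $v_t(q,p):=\tau(q,p)u(\mathbf{x})g_t$ has $\sigma$-component $(e^{-t}\sigma(q),\, e^{t}(\sigma(q)x^\sigma+\sigma(p)))$, so writing $e_\sigma=1$ at a real place and $e_\sigma=2$ at a complex place,
$$\height(v_t(q,p))=\prod_{\sigma\in S}\max\bigl(e^{-t}|\sigma(q)|,\, e^{t}|\sigma(q)x^\sigma+\sigma(p)|\bigr)^{e_\sigma}.$$
By the proposition at the end of Section 2, the trajectory \eqref{eq:dani2} is bounded if and only if there exists $\delta>0$ such that $\height(v_t(q,p))\ge\delta$ for every $t\ge0$ and every nonzero $(q,p)\in\O_K^2$.

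For the implication ``bounded $\Rightarrow$ badly approximable'' I would choose $t$ to balance the two factors. Set $\sigma_M=\max_\sigma|\sigma(q)|$ and $\rho_M=\max_\sigma|\sigma(q)x^\sigma+\sigma(p)|$. The case $q=0$ is immediate from $\height(v_t(0,p))=e^{dt}|N(p)|\ge 1$, and the subcase $\rho_M=0$ with $q\ne0$ forces $\mathbf{x}=\tau(-p/q)\in\tau(K)$, for which the same pair $(q,p)$ witnesses $\height(v_t(q,p))=e^{-dt}|N(q)|\to 0$, contradicting the assumed boundedness. When $\sigma_M\ge\rho_M>0$, take $t=\tfrac12\log(\sigma_M/\rho_M)\ge0$; both entries of $v^\sigma$ are then bounded at every place by $\sqrt{\sigma_M\rho_M}$, hence $\height(v_t)\le(\sigma_M\rho_M)^{d/2}$ and $\sigma_M\rho_M\ge\delta^{2/d}$. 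When $\sigma_M<\rho_M$, take $t=0$; since $\sigma_M^d\ge|N(q)|\ge 1$, the bound $\height(v_0)\le\rho_M^d$ yields $\sigma_M\rho_M\ge\rho_M\ge\delta^{1/d}$. Either way \eqref{eq:dani1} holds.

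For the reverse implication, assume $\mathbf{x}$ satisfies \eqref{eq:dani1} with constant $c$, fix $t\ge0$, and consider a nonzero $(q,p)\in\O_K^2$ (the case $q=0$ being trivial). I apply Lemma \ref{lem:height2} to $v:=v_t(q,p)$: it produces a unit $\xi\in\O_K^{\times}$ with $|\sigma(\xi)|\cdot\|v^\sigma\|\le C\cdot\height(v)^{1/d}$ uniformly in $\sigma$, the constant $C$ depending only on $K$. Scaling $(q,p)$ by $\xi$ multiplies $v^\sigma$ by $\sigma(\xi)$ and preserves $\height$ by Lemma \ref{lem:height1}, so we may replace $(q,p)$ by $(\xi q,\xi p)$ and assume $\|v^\sigma\|\le Ch$ at every place, where $h:=\height(v)^{1/d}$. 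Reading off both coordinates gives $|\sigma(q)|\le Che^{t}$ and $|\sigma(q)x^\sigma+\sigma(p)|\le Che^{-t}$ for every $\sigma$, hence $\sigma_M\rho_M\le C^2h^2$. Combining with the hypothesis $\sigma_M\rho_M>c$ forces $\height(v)>(c/C^2)^{d/2}=:\delta>0$, and Mahler's criterion then yields bounded orbit.

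The main obstacle is the reverse direction: since $\sigma(q)x^\sigma+\sigma(p)$ can vanish or be extremely small at individual places even when $\mathbf{x}$ is not globally $K$-rational, a naive place-by-place AM--GM estimate on the product defining $\height(v)$ does not give a uniform lower bound. Lemma \ref{lem:height2}, which invokes Dirichlet's unit theorem to equalize the norms $\|v^\sigma\|$ across places up to a bounded factor, is precisely what converts the global badly approximable condition into the required pointwise height bound.
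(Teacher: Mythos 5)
Your argument is correct, but it takes a genuinely different route from the paper. The paper runs both directions through the sup-norm form of Mahler's criterion (Theorem \ref{thm:Mahler}): boundedness is equivalent to $\max_{\sigma}\|(e^{-t}\sigma(q),e^{t}(\sigma(q)x^{\sigma}+\sigma(p)))\|>c'$ for all $t\ge 0$ and nonzero $(q,p)\in\O_K^2$, and then both implications are proved contrapositively by a short balancing argument: given a failure of \eqref{eq:dani1} one picks $t$ so that $\max_{\sigma}e^{-t}|\sigma(q)|=c^{1/2}$, and given an unbounded orbit one simply splits the small vector into its two coordinate bounds and multiplies, the factors $e^{\pm t}$ cancelling. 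No unit theorem and no case analysis (e.g.\ your $\rho_M=0$ or $q=0$ subcases) is needed there. You instead route everything through the product/height criterion of the proposition at the end of Section 2, which forces you to invoke Lemma \ref{lem:height2} (Dirichlet's unit theorem) to convert the product bound $\height(v)^{1/d}$ into per-place bounds in the ``badly approximable $\Rightarrow$ bounded'' direction, together with Lemma \ref{lem:height1} to justify $\height(v)\neq 0$ and the invariance of $\height$ under the unit rescaling, and to use $|N_{K/\Q}(q)|\ge 1$ and the degenerate case $\mathbf{x}\in\tau(K)$ in the other direction. Both proofs are sound; the paper's is shorter and more elementary precisely because the sup-norm criterion sidesteps the obstacle you correctly identify (small individual places inside the product), while your version has the merit of making the quantitative link explicit between the approximation constant $c$ and the height threshold $\delta$ (namely $\delta=(c/C^2)^{d/2}$ and conversely $c$ in terms of $\delta$), in effect re-deriving the equivalence of the two compactness criteria along the way. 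Minor points: the $q=0$ remark belongs only in the reverse direction, and at the end one should shrink $\delta$ (or $c$) slightly to keep the inequalities strict and uniform, but these are cosmetic.
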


\begin{proof}
	By Mahler's compactness criterion, we know that boundedness of the trajectory is equivalent
	to the existence of positive~$c'$ such that
	\[
		 \max_{\sigma\in S}\bigl\|\tau(q,p)\left(\begin{pmatrix}1& x^{\sigma} \\ &1  \end{pmatrix}
		     \begin{pmatrix}e^{-t}&\\&e^t\end{pmatrix}\right)_{\sigma\in S} \bigr\|>c'
	\]
	or equivalently
	\[
	  \max_{\sigma\in S}\| (e^{-t}\sigma(q),e^t(\sigma(q)x^\sigma+\sigma(p)))\|>c'
	\]
	for all~$t\geq0$ and all nonzero pairs~$(q,p)\in\O_K^2$. 
	
	Assume first that the orbit is unbounded. Then there exists for every~$c'>0$ some~$t\geq 0$ and some
	nonzero vector~$(q,p)\in\O_K^2$ with
	\[
	 \max_{\sigma\in S}\|(e^{-t}\sigma(q),e^t(\sigma(q)x^\sigma+\sigma(p)))\|<c'.
	\]
	Note that since~$t\geq 0$, that~$q=0$ would contradict this inequality (at least for small enough~$c'$
	since~$p\in\O_K$ cannot be small at all places~$\sigma\in S$). Hence~$q\neq 0$.
	By splitting the above inequality into two inequalities for the first and second coordinates
	of the vectors involved and taking the product we obtain 
	\[
	\max_{\sigma \in S}\{\vert \sigma(q)x^{\sigma} + \sigma(p)\vert\}\max_{\sigma\in S}\{\vert \sigma(q)\vert\} < (c')^2.
	\]
	As~$c'$ was arbitrary we see that the vector~$(x^\sigma)_{\sigma\in S}$ is not badly approximable.
	
		Assume now that~${\bf x}=(x^\sigma)_{\sigma\in S}$ is not badly approximable. Then we have by definition 
	that for every~$c>0$ there exists~$p,q\in\O_K$
	with~$q\neq 0$ such that
	\[
	 \max_{\sigma\in S}\{|\sigma(p)+x^\sigma\sigma(q)|\}\max_{\sigma\in S}\{|\sigma(q)|\}<c.
	\]
	We choose~$t=-\frac12\log c+\log\max_{\sigma\in S}\{|\sigma(q)|\}$, which will be positive
	if only~$c$ is sufficiently small (as the second summand is bounded from below for~$q\in\O_K\setminus\{0\}$). 
  Note that this gives~$\max_{\sigma\in S}e^{-t}|\sigma(q)|=c^{\frac12}$. Dividing our assumed
  inequality by the latter equality we also get~$\max_{\sigma\in S}e^t|\sigma(p)+x^\sigma\sigma(q)|<c^{\frac12}$.
 As~$c$ was arbitrary, this shows that the orbit is not bounded.
\end{proof}

Notice that it is sufficient to have that the trajectory is bounded for a discrete sequence of times $t_n$ where the consecutive differences are uniformly bounded.

\section{A special case}
In this section, we give proofs of Theorems \ref{thm:main1} and \ref{thm:main2} in a simplified linear case. In the next section, we show that a modification of this simplified argument suffices for the general $C^1$ case as well.  
\subsection{A special case of Theorem \ref{thm:main1}}
We fix notation as in Theorem \ref{thm:main1}.  Fix $0<\alpha < \frac{1}{2}$ and let $0< \beta <1$.  Notice that we choose 
$\alpha$ is independent of $\beta$ as required by the definition of winning. Fix the base point $\SL_2(\O_K)g \in \SL_2(\O_K)\backslash \SL_2(K_S)$ and denote by $\Lambda = \tau(\O_K^2)g$ the associated discrete $\O_K$-module viewed as a subset of $\R^{2d}$. In the simplified case we suppose that the function $\Phi$ is such that $\phi_{\sigma}(x)=a_{\sigma}x$ with $a_\sigma\in\R$ and $a_{\sigma}\neq 0$ for sufficiently many $\sigma\in S$ as required in the theorem, to be precise at least one half of the factors should satisfy~$a_\sigma\neq 0$ with the complex places counting double. In this setting we will describe the strategy of Player B and prove that it is indeed winning.\\  

The ultimate goal of Player B is to have the point $x_{\infty}$ which remains at the end of the game satisfying that the set $\{v\in \Lambda\Phi(x_{\infty})g_{t}\setminus \{0\}~|~t \geq 0 \}$ is bounded away from zero.  As remarked before, it is sufficient to have that $\{v\in \Lambda\Phi(x_{\infty})g_{t_n}\setminus\{0\}~|~n \in \N \}$ is bounded away from zero where $t_n$ is a positive sequence tending to infinity with bounded gaps. During each round of the game, Player B will want to monitor the short vectors that will appear in the module $\Lambda \Phi(x_n)g_{t_n}$ and play in such a way that these vectors are expanding under $g_t$ after a finite amount of time (which needs to be independent of the short vector and the length of time the game has already been played).  The initial step of the game by Player A may force a vector to be short for a long time, and the initial step of Player B will only make sure that the perturbed vector grows at some point in the future.  However, for the later steps of the game it is important that we give a uniform lower bound on how short the perturbed vectors can become.\\

After the initial steps of the game, the $n$th round of the game plays out as follows:  Player A has chosen a subinterval $A_n = B(x_n, \rho(\alpha\beta)^n)\subseteq B_{n-1}$.  This corresponds to the collection of $\O_K$-modules 
$$\{\Lambda \Phi(x_n+x)g_{t_n})~|~x\in B(0,\rho(\alpha\beta)^n)\}$$
\noindent where $t_n=\frac{1}{2}\log\frac{1}{\rho(\alpha\beta)^n}$.  Player B focuses on the modules associated to the midpoint of $A_n$, namely $\Lambda \Phi(x_n)g_{t_n}$.  If this module contains no short vectors, i.e. no nonzero $v\in\Lambda\Phi(x_n)g_{t_n}$ with $\height(v)<1$, then Player B chooses the new ball $B_n$ heedlessly as allowed by the
rules of the game.  Suppose there does exist a nonzero $v\in\Lambda\Phi(x_n)g_{t_n}$ with $\height(v)<1$.  The strategy Player B employs in choosing the ball B makes use of the following phenomena\footnote{This is where we use
that our quotient has~$\Q$-rank one.}:

\begin{lemma}\label{lem:onespan}
Let $v= (v_1,v_2)$ and $w=(w_1,w_2)$ be two nonzero vectors of an $\O_K$-module $\Delta = \tau(\O_K^2)h \in X$.  Suppose $\height(v) \height(w) <1$.  Then $Kv=Kw$.  
\end{lemma}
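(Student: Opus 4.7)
The plan is to translate the statement into a problem about the lattice $\O_K^2$, reduce $Kv = Kw$ to the vanishing of a Plücker-type determinant, and then exploit the fact that a nonzero algebraic integer has absolute norm at least one.

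First I would write $v = \tau(a,b)h$ and $w = \tau(c,d)h$ for some nonzero $(a,b), (c,d) \in \O_K^2$. Since $h \in \SL_2(K_S)$ is invertible at every place, the map $(a,b) \mapsto \tau(a,b)h$ is $K$-linear and injective on each factor, so $Kv = Kw$ is equivalent to the $K$-linear dependence of $(a,b)$ and $(c,d)$, i.e.\ to the condition $ad - bc = 0$ in $K$.

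Next, at each place $\sigma \in S$ I compute the $2 \times 2$ determinant with rows $v^\sigma$ and $w^\sigma$. Because $\det h^\sigma = 1$ for each $\sigma$, this determinant equals $\sigma(ad-bc)$. The elementary estimate $|xy' - x'y| \leq 2 \max(|x|,|x'|)\max(|y|,|y'|)$ then gives $|\sigma(ad-bc)| \leq 2 \Vert v^\sigma \Vert \Vert w^\sigma \Vert$ at each real place and the analogous bound (squared) at complex places. Taking the product over all $\sigma$ with the appropriate real/complex weighting, the left-hand side becomes the absolute norm $|N_{K/\Q}(ad-bc)|$ while the right-hand side becomes $C \cdot \height(v)\height(w)$ for an explicit constant $C$ depending only on $r$ and $s$.

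Now suppose $Kv \neq Kw$, so $ad-bc$ is a nonzero element of $\O_K$. Then $|N_{K/\Q}(ad-bc)| \geq 1$, so $\height(v)\height(w) \geq 1/C$, contradicting the hypothesis $\height(v)\height(w) < 1$ (after adjusting the implied constant, or equivalently by absorbing the numerical factor into the normalization of $\Vert\cdot\Vert_\sigma$ so that the inequality $|\sigma(ad-bc)|^{[K_\sigma:\R]} \leq \Vert v^\sigma \Vert_\sigma \Vert w^\sigma\Vert_\sigma$ holds cleanly). Hence $ad-bc = 0$, which gives $Kv = Kw$.

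The main obstacle is the bookkeeping of the numerical constants coming from the $2\times 2$ determinant bound at each place — the argument requires that after combining all places the threshold of $1$ in the hypothesis is enough to force $|N_{K/\Q}(ad-bc)| < 1$. Apart from this, everything is conceptually straightforward: the proof is really just the statement that the projective point $[v^\sigma]$ determines $\sigma(a\!:\!b)$, and the product formula forbids $ad-bc$ from being a very small nonzero algebraic integer.
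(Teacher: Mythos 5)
Your argument is essentially the paper's own proof: write $v=\tau(a,b)h$, $w=\tau(c,d)h$, note that the place-by-place $2\times2$ determinant formed from $v^\sigma,w^\sigma$ equals $\sigma(ad-bc)$ since $\det h^\sigma=1$, bound the product of these determinants by $\height(v)\height(w)$, and use that a nonzero element of $\O_K$ has absolute norm at least $1$ to conclude $ad-bc=0$, hence $Kv=Kw$.

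The one place you deviate is the constant bookkeeping, and your unease there is justified --- indeed you are more careful than the paper. The paper's displayed step $\bigl|v_1^\sigma w_2^\sigma-v_2^\sigma w_1^\sigma\bigr|\le\max\{|v_1^\sigma|,|v_2^\sigma|\}\max\{|w_1^\sigma|,|w_2^\sigma|\}$ is false as written (take $v^\sigma=(1,1)$, $w^\sigma=(1,-1)$); the honest bound carries your factor $2$ at each place ($4$ after squaring at complex places), so one only gets $|N_{K/\Q}(ad-bc)|\le 2^{d}\,\height(v)\height(w)$. Thus what this argument actually proves is the lemma with threshold $2^{-d}$ in place of $1$, and that threshold is sharp: taking every component of $h$ equal to $\frac{1}{\sqrt2}\begin{pmatrix}1&1\\-1&1\end{pmatrix}$ and $v=\tau(1,0)h$, $w=\tau(0,1)h$ gives $\height(v)\height(w)=2^{-d}$ with $Kv\neq Kw$, so the factor cannot simply be absorbed into a renormalization as you tentatively suggest. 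None of this matters downstream: in the winning strategy the cut-off ``$\height<1$'' for short vectors can be replaced by ``$\height<2^{-d}$'' throughout, and the argument is unchanged.
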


\begin{proof}
We may write $v=(a_1,a_2)h$ and $w=(b_1,b_2)h$ with $a_1,a_2,b_1,b_2\in\O_K$.  Recall that $\Delta$ admits a basis as an $\O_K$-module whose projection to each factor corresponding to some $\sigma$ determines a parallelogram with area 1.  Consider, then, the parallelograms formed by the projections of $v$ and $w$ to each factor.  On one hand, the product of their areas (resp.\ the areas squared for the complex places) is given by
\begin{align*}
 \prod_{\sigma}\left|\det\begin{pmatrix}v^{\sigma}\\w^{\sigma}\end{pmatrix} \right|_\sigma &= 
\prod_{\sigma}\left|\det\begin{pmatrix}v_1^{\sigma}&v_2^{\sigma}\\w_1^{\sigma}&w_2^{\sigma}\end{pmatrix}\right|_\sigma 
\\ &\leq \prod_{\sigma}  
\max\{|v_1^{\sigma}|_\sigma,|v_2^{\sigma}|_\sigma\}\max\{|w_1^{\sigma}|_\sigma,|w_2^{\sigma}|_\sigma\} 
\\&= \height(v)\height(w)<1.
\end{align*}

On the other hand, using the fact that $h\in \SL_2(K_S)$ and so $\det(h^{\sigma})=1$ for all $\sigma$, we have
\[
\prod_{\sigma}\left|\det\begin{pmatrix}v^{\sigma}\\w^{\sigma}\end{pmatrix} \right|_\sigma = 
\prod_{\sigma}\left|\det\begin{pmatrix}a_1^{\sigma}&a_2^{\sigma}\\b_1^{\sigma}&b_2^{\sigma}\end{pmatrix}\right|_\sigma
 = \prod_{\sigma} |\sigma(a_1b_2-b_1a_2)|_\sigma=|N_{K|\Q}(a_1b_2-b_1a_2)|,
\]
which is an integer as $a_1b_2-b_1a_2\in \O_K$.
Therefore,
\[
\prod_{\sigma}\det\begin{pmatrix}v^{\sigma}\\w^{\sigma}\end{pmatrix} = 0,
\]
implying that~$a$ and~$b$ are~$K$-multiples of each other. As multiplication by elements of~$K$ on~$K_S^2$ 
commutes with~$h$, we see that also  $v$ and $w$ are $K$-linearly dependent.
\end{proof}

Thus in any round of the game Player B need only worry about a single $K$-span of short vectors.   Define $\Phi_{n}(x)=g_{t_n}^{-1}\Phi(x)g_{t_n}$ so that $\Lambda\Phi(x_n+x)g_{t_n} = \Lambda \Phi(x_n)g_{t_n}\Phi_{n}(x)$ and the vectors in~$\Lambda\Phi(x_n+x)g_{t_n}$ corresponding to the short vector $v\in\Lambda\Phi(x_n)g_{t_n}$ are $v\Phi_{n}(x)$.  Player B wishes to choose $B_n$ so that for $x\in B_n$ the neighbors $v\Phi_{n}(x)$ are all eventually expanding (or at least not further contracting) under $g_{t_n}$, where the notion of ``eventually" depends only on $\alpha$,~$\beta$ and $\Phi$, but not on $v$, $n$ or the game play up to this point.  If the height of $v\Phi_{n}(x)$ is growing (in a uniform way for all~$x\in B_n$) under $g_{t_m}$ for~$m\geq n$, 
then the same holds for all other vectors in $Kv\Phi_{n}(x)$.  Moreover, by the lemma we know that for the period of
time that the height of our vector is~$<1$ there cannot be any other vector outside the~$K$-span with height~$<1$. Also note that~$\Phi_n(x)$ is uniformly bounded for~$|x|<\rho(\alpha\beta)^n$, which implies that
all vectors~$w\in \Lambda \Phi(x_n)g_{t_n}\Phi_{n}(x)$ of sufficiently small norm are by the lemma in~$Kv\Phi_n(x)$
and so controlled by the move of Player B.

Recall that under an application of $g_{t}$, $v_1^{\sigma}$ is contracted by $e^{-t}$ and $v_2^{\sigma}$ is expanded by $e^{t}$.  Consider the ratio between the expanding direction
in the factor corresponding to some $\sigma \in S$ of $v\Phi_{n}(x)$ with~$a_\sigma\neq 0$ and the norm of $v^{\sigma}$:
\[
\frac{\left|v_1^{\sigma}a_{\sigma}e^{2t_n}x+v_2^{\sigma}\right|}{\|v^{\sigma}\|} = 
\left|a_{\sigma}(\rho(\alpha\beta)^n)^{-1}x\frac{v_1^\sigma}{\|v^\sigma\|}+\frac{v_2^{\sigma}}{\|v^{\sigma}\|}\right|,
\]
where~$a_\sigma$ denotes the slope of~$\Phi$ at~$\sigma$. Also recall that~$x$ is restricted, at this stage in the 
game, to~$x\in (-\rho(\alpha\beta)^n,\rho(\alpha\beta)^n)$. This shows that it is possible to choose~$x$ such
that this ratio is bounded below by a constant~$\e_{\sigma}>0$ (depending on~$a_\sigma$ and~$\alpha$ only).
In fact, depending on the signs of~$v_1^\sigma$ and~$v_2^\sigma$ we can choose a subinterval $B \subset (-\rho(\alpha\beta)^n,\rho(\alpha\beta)^n)$ of radius $\alpha$ such that the ratio is uniformly bounded from below by~$\alpha$. We may choose $B=(\rho(\alpha\beta)^n(1-2\alpha),\rho(\alpha\beta)^n)$ if the real part of $\frac{v_2^{\sigma}}{v_1^{\sigma}}$ is nonnegative, or choose $B=(-\rho(\alpha\beta)^n,-\rho(\alpha\beta)^n(1-2\alpha))$ if the real part of $\frac{v_2^{\sigma}}{v_1^{\sigma}}$ is negative.  

If we make this choice then it follows that the expanding component of~$(v\Phi_n(x))^\sigma$ is of norm at least~$\e_{\sigma}\|v^\sigma\|$ for all~$x$ in the new ball
and this remains true in the future even if the vector initially is contracted.
Moreover, if we know that the coordinate in the expanding direction is of norm at least ~$\e_{\sigma}\|v^\sigma\|$,
then after time $t=|\log{\e_{\sigma}}|$, the expanding direction of the 
factor $(v\Phi_{n}(x)g_t)^{\sigma}$ will be at least as big as~$\|v^{\sigma}\|$ for all $x\in B$. 

Take $\e = \min_{\sigma} \e_{\sigma}$.  If half of the factors with nonzero~$a_\sigma$  agree in the choice of $B$,  then $B$ is chosen to be the rightmost (resp., leftmost) subinterval of $(-\rho(\alpha\beta)^n,\rho(\alpha\beta)^n)$.  After this Player~$A$
makes his move. Now we again look at all components and repeat the vote among those factors that voted differently the first time. After a uniformly bounded number of repetitions of this
voting procedure, say~$m$ repetitions (during which the game moves on) we have ensured that the norm of the expanding component~$v_1^\sigma$ is at least~$\e e^{-2t_m}\|v^\sigma\|$ for all places~$\sigma$ with~$a_\sigma\neq 0$. 
Let~$k=n+m$ and let~$B_k$ be the ball that is chosen by Player B at the last step, then it follows
that the height of~$v\Phi_k(x)g_t$ is uniformly bounded away from zero for all~$x\in B_k$ and all~$t\geq t_k$. 
Indeed, for all places~$\sigma\in S$ with~$a_\sigma\neq0$ we ensured that the expanding direction
is significant in size, and even if all the remaining directions are contracted by~$g_t$ the height will be bounded away
from~$0$ (depending on~$\e$ and~$\height(v)$).

If during the above procedure (or later in the game) a new vector becomes of height less than one, its height is bounded away from zero by a constant depending on~$m$. We then repeat the procedure with the new vector.  

Notice that the strategy for Player B has not depended on the short vector $v$ but on the direction of $v$. Moreover, this is indeed a winning strategy for Player B. The resulting point $x_{\infty}\in \bigcap A_n$ satisfies:  For all $v \in \Lambda\Phi(x_{\infty})$, either $\height(vg_{t_n}) \geq 1$ for all $n$, or in the first round $n$ with  $\height(vg_{t_n}) < 1$, we have that $\height(v g_{t_{n+m}})$ is increasing for positive $n$, whence $\height(vg_t)$ is bounded away from zero for all~$t\geq t_n$.  Thus, no vector in $\{v\in\Lambda\Phi(x_{\infty})g_{t_n}\setminus\{0\}~|~n\in\N\}$ is ever shorter than a constant depending
on~$\alpha,\beta,\e$ and~$m$, except for perhaps the short vectors which appear in $\Lambda\Phi(x_{\infty})$ initially.

\subsection{A counterexample}\label{counterex} Note that we are crucially using the fact that $a_{\sigma} \neq 0$ for at least half the $\sigma$'s (counting complex places double). Indeed, suppose  $d =r= 3$ and in two of the factors, we have $a_{\sigma_1} =a_{\sigma_2}= 0$. 
In this case, it may happen that the vector~$v$ considered above satisfies that~$v^{\sigma_1}$ and $v^{\sigma_2}$
are contracted eigenvectors. However, in that case Player B will always lose --- no matter of the choices in the
game the height of the vector corresponding to~$v$ will go to zero. 
As we will see below, this behavior becomes even more significant in the unequal weights case. 

\subsection{Proof of Theorem \ref{thm:main2} in a special case}

The proof of Theorem \ref{thm:main2}, i.e.~the weighted result for quadratic extensions  in the linear case follows along the same general lines as above. We explain the strategy at the $n$-th stage following the notation in Theorem~\ref{thm:main2}. In particular, we are now acting by $g(\br)_t$. As before, Player A has chosen a subinterval $A_n = B(x_n, \rho(\alpha\beta)^n)\subseteq B_{n-1}$.  This corresponds to the collection of $\O_K$-modules 
\[
\{\Lambda \Phi(x_n+x)g({\bf r})_{t_n})~|~x\in B(0,\rho(\alpha\beta)^n)\}
\]
where $t_n=\frac{1}{2r}\log\frac{1}{\rho(\alpha\beta)^n}$ and  $r=\max_{\sigma\in S}r_{\sigma}$. 

We assume that there exists nonzero $v\in\Lambda\Phi(x_n)g({\br})_{t_n}$ with $\height(v)<1$. 
Let~$\sigma_0\in S$ be a place with~$r_{\sigma_0}=r$. By Lemma \ref{lem:onespan}, we again have to worry about at most one direction.  Define $\Phi_{n}(x)=g({\bf r})_{t_n}^{-1}\Phi(x)g({\bf r})_{t_n}$ so that $\Lambda\Phi(x_n+x)g({\bf r})_{t_n} = \Lambda \Phi(x_n)g({\bf r})_{t_n}\Phi_{n}(x)$, denote by $v$ the short vector and consider $v\Phi_{n}(x)$.  Under an application of $g({\bf r})_{t}$, $v_1^{\sigma_0}$ is contracted by $e^{-rt}$ and $v_2^{\sigma_0}$ is expanded by $e^{rt}$. In the component corresponding to~$\sigma_0$ consider the ratio between the expanding coordinate $v\Phi_{n}(x)$ and the norm of the vector:
\[
\left|\frac{v_1^{\sigma_0}a_{\sigma_0}e^{2t_nr}x+v_2^{\sigma_0}}{\|v^{\sigma_0}\|}\right| 
= \left|a_{\sigma_0}(\rho(\alpha\beta)^n)^{-1}x\frac{v_1^{\sigma_0}}{\|v^{\sigma_0}\|}+\frac{v_2^{\sigma_0}}{\|v^{\sigma_0}\|}\right|
\]
 \noindent This ratio can be guaranteed to be larger than $|a_{\sigma_0}|(1-2\alpha)$ for all $x$ in a subinterval $B \subset (-\rho(\alpha\beta)^n,\rho(\alpha\beta)^n)$ of radius $\alpha$, choosing $B=(\rho(\alpha\beta)^n(1-2\alpha),\rho(\alpha\beta)^n)$ if $\frac{v_2^{\sigma_0}}{v_1^{\sigma_0}}$ is nonnegative, or choosing $B=(-\rho(\alpha\beta)^n,-\rho(\alpha\beta)^n(1-2\alpha))$ if $\frac{v_2^{\sigma_0}}{v_1^{\sigma_0}}$ is negative.
(If~$v_1^{\sigma_0}=0$ then the vector~$v^{\sigma_0}$ is an expanding eigenvector already anyway and the ratio is one.)  %(Note that $0\geq 1-\frac{r_{\sigma_0}}{r}$ and hence $(\rho(\alpha\beta)^n)^{1-\frac{r_{\sigma_0}}{r}}\geq 1$.)

We now argue as before. As the modified vector~$v\Phi_n(x)$ for~$x$ in the new subinterval~$B$
has a significant expanding component in the place~$\sigma_0$ and since this place is the one with the faster dynamics, it follows that the height of~$v\Phi_n(x)g(\br)_t$ will be~$\geq 1$ for~$x\in B$ and 
for~$t\geq t_0=t_0(a_{\sigma_0},\alpha,\br)$. As before, since~$\Phi_n(x)$ is bounded no vector in~$\Lambda\Phi(x_n+x)g({\br})_{t_n}=\Lambda\Phi(x_n)g({\br})_{t_n}\Phi_n(x)$ will have height much smaller than~$v$. 
Once more we obtain a winning strategy for Player B.

%Let $0<\e<(1-2\alpha)\min_{\sigma}\{|a_{\sigma}|\}$.  Then after time $t=\frac{1}{2r}\log \frac{1}{\e}$, the factor $(v\Phi_{n}(x)g({\bf r})_t)^{\sigma}$ will be growing for all $x\in B$.  However it is possible that both factors will not suggest the same choice of left- or rightmost subinterval for $B$. If the $r_{\sigma}$ are equal then we proceed exactly as above. If not, we make the choice determined by the bigger $r_{\sigma}$ and we have that $\height(v\Phi(x)g(\br)_{t_m})$ will be increasing for all $m$ satisfying $t_{m}>\frac{1}{2}\log \frac{1}{\e}$ and the rest of the argument proceeds as before. In the case that one component of $r$ is 0, Player B will only apply his strategy in the component with the nontrivial dynamics.  In one place, the action is static and in the other, no vector becomes too small.  Thus, Player B again has a winning strategy.  

 In this case, once more the lower bound on derivatives which amounts to $a_{\sigma} \neq 0$ for \emph{both} $\sigma$ is crucial. If not, we may consider a similar example to the one in \S \ref{counterex}, i.e.\ suppose $a_{\sigma'} = 0$ for one factor.
If now~$\sigma'=\sigma_0$ for some weight~$\br$, then the above strategy fails. Moreover, in that case it may happen that our lattice contains a vector~$v=(v^{\sigma_0},v^{\sigma_1})$ with~$v^{\sigma_0}$ being contracted by~$e^{r_{\sigma_0}t}$ under the dynamics of~$g(\br)_t$. Even if~$v^{\sigma_1}$ is now expanded by~$e^{r_{\sigma_1}t}$, then height of
the vector~$v g(\br)_t$ will still go to zero and this remains true for every possible outcome of the game. Hence in
this case there cannot exist a winning strategy.

\section{Proof in the general case}
We now discuss the proof of Theorem \ref{thm:main1} in the general case of $\phi$ in $C^1$. Recall that we have assumed that we have a subset $S' \subset S$ with $|\{\sigma \in S'~|~\sigma \text{ is real} \}| + 2|\{\sigma \in S'~|~\sigma \text{ is complex} \}| > \lfloor \frac{d}{2} \rfloor$ and for each $\sigma \in S'$, only finitely many points $x$ have $\phi_{\sigma}'(x)=0$.  Here we use linear approximations to $\phi$, and the strategy explained above for linear functions.  At the beginning of the game, Player B acts by moving the playing field away from all points $x$ with $\phi_{\sigma}'(x)=0$ for any $\sigma\in S'$. Since this is assumed to be a finite set, this takes only finitely many rounds of the game.  Thus, the game arrives at round $N$ in the situation that for all $\sigma \in S'$, $\phi_{\sigma}'(x)\neq 0$ for all $x\in \overline{B_N}$.  Since $\phi_{\sigma}'$ is  uniformly  continuous on $\overline{B_N}$, there exists for each $\sigma\in S'$, $m_{\sigma},M_{\sigma}\in \R$ such that $0\not\in [m_{\sigma},M_{\sigma}]$ while for all $x\in B_N, \phi_{\sigma}'(x)\in[m_{\sigma},M_{\sigma}]$.  Player B will use these bounds to produce the piecewise linear approximation to $\phi$.  

Suppose that in round $n>N$ there is a nonzero vector $v\in\Lambda\Phi(x_n)$ with $\height(vg_{t_n})<1$.  (Again we choose $t_n = \frac1 2 \log \rho(\alpha \beta)^{-n}$.) Define
\[
\hat{\Phi}_{n}(x) = g^{-1}_{t_n}\left(\begin{pmatrix}1& \phi_{\sigma}(x_n+x)-\phi_{\sigma}(x_n) \\ 0&1\end{pmatrix}\right)_{\sigma}g_{t_n}.
\]
Then as before Player B would like to choose $B_n$ so that  the ``angle" between  $v\hat{\Phi}_{n}(x)$ and the contracting direction is significant.  Thus for $v\hat{\Phi}_{n}(x)\in\Lambda\Phi(x_n+x)g_{t_n}$ and $\sigma\in S'$, Player B considers the ratio
\[
\left| \frac{v_1^{\sigma}(\phi_{\sigma}(x_n+x)-\phi_{\sigma}(x_n))e^{2t_n}+v_2^{\sigma}}{\|v^{\sigma}\|} \right| 
= \left|(\phi_{\sigma}(x_n+x)-\phi_{\sigma}(x_n))(\rho(\alpha\beta)^n)^{-1} \frac{v_1^{\sigma}}{\|v^{\sigma}\|} + \frac{v_2^{\sigma}}{\|v^{\sigma}\|}\right|,
\]
and wishes to bound this quantity from below uniformly over $x$ in the yet to be determined $B_n$.
Since $\phi$ is monotone, let us first suppose that $\phi$ is increasing, that is, we assume $0<m_{\sigma}\leq M_{\sigma}$.  Then on $[0,\rho(\alpha\beta)^n)$  we have that $\phi_{\sigma}(x_n+x)-\phi_{\sigma}(x_n)\geq m_{\sigma}x$ and on $(-\rho(\alpha\beta)^n,0)$, $\phi_{\sigma}(x_n+x)-\phi_{\sigma}(x_n)\geq M_{\sigma}x$.  Using these linear approximations of $\phi$, Player B uses the same strategy as before.  If the real part of $ \frac{v_2^{\sigma}}{v_1^{\sigma}}$ is nonnegative, then for $x$ in $((1-2\alpha)\rho(\alpha\beta)^n,\rho(\alpha\beta)^n)$,
\[
 \left|(\phi_{\sigma}(x_n+x)-\phi_{\sigma}(x_n))(\rho(\alpha\beta)^n)^{-1}\frac{v_1^{\sigma}}{\|v^{\sigma}\|} + \frac{v_2^{\sigma}}{\|v^{\sigma}\|}\right| \geq m_{\sigma}(1-2\alpha).
\]
Similarly, if the real part of $ \frac{v_2^{\sigma}}{v_1^{\sigma}}$ is negative, then for $x$ in $(-\rho(\alpha\beta)^n,-(1-2\alpha)\rho(\alpha\beta)^n)$,
\[
 \left|(\phi_{\sigma}(x_n+x)-\phi_{\sigma}(x_n))(\rho(\alpha\beta)^n)^{-1}\frac{v_1^{\sigma}}{\|v^{\sigma}\|} + \frac{v_2^{\sigma}}{\|v^{\sigma}\|}\right| \geq M_{\sigma}(1-2\alpha).
\]
Player B uses a similar analysis in the case that $\phi_{\sigma}$ is decreasing.  So for fixed $\sigma\in S' $, Player B can choose $B_n$ so that for all $x\in B_n$ the ratio between the coordinates in the factor corresponding to $\sigma$ of any neighbor $v\hat{\Phi}_{n}(x)$ is greater than $\inf_{y\in B_N}|\phi_{\sigma}'(y)|(1-2\alpha)$.  

As before, Player B should execute this strategy over several, say $m$, rounds of the game to ensure that $\height(v\hat{\Phi}_{n}(x)g_{t})$ is increasing for all $t>t_m + |\log \e|$, where $\e = \linebreak \min_{\sigma\in S' } \inf_{y\in \overline{B_N}}|\phi_{\sigma}'(y)|(1-2\alpha)$.  The same reasoning as in the special case shows that this strategy is winning.

\noindent The same strategy works for the more general case of Theorem \ref{thm:main2}.


\begin{thebibliography}{99}
\bibitem{An1} Jinpeng An, \textit{Badziahin-Pollington-Velani's theorem and Schmidt's game}, Bull. London. Math. Soc. to appear.
\bibitem{An2} Jinpeng An, \textit{ Two dimensional badly approximable vectors and Schmidt's game}, preprint.
\bibitem{Aravinda} C. S. Aravinda, \textit{Bounded geodesics and Hausdorff dimension}, Math. Proc. Cambridge Philos. Soc. 116 (1994), no. 3, 505--511.
%\bibitem{AL} C. S. Aravinda and E. Leuzinger, \textit{Bounded geodesics in rank-1 locally symmetric spaces}, Ergodic Theory Dynam. Systems 15 (1995), no. 5, 813--820.   
\bibitem{BPV} D. Badziahin, A. Pollington and S. Velani, \textit{On a problem in simultaneous Diophantine approximation: Schmidt's conjecture}, Ann. of Math. (2) 174 (2011), no. 3, 1837--1883.
\bibitem{Ber} V. Beresnevich, \textit{Badly approximable points on manifolds}, preprint.
\bibitem{BFKRW} R. Broderick, L. Fishman, D. Kleinbock, A. Reich, B Weiss, \textit{The set of badly approximable vectors is strongly $C^1$ incompressible}, Math. Proc. Cambridge Philos. Soc. 153 (2012), no. 2, 319--339.
\bibitem{B} E. Burger, \textit{Homogeneous Diophantine approximation in S-integers}, Pacific J. Math. 152 (1992), no. 2, 211--253.
\bibitem{Dani1} S. G. Dani, \textit{Divergent trajectories of flows on homogeneous spaces and Diophantine approximation.} J. Reine Angew. Math. 359 (1985), 55--89.
%\bibitem{Dani2} \bysame, \textit{Bounded orbits of flows on homogeneous spaces}, Comment. Math. Helv. 61 (1986), 636--660.
\bibitem{Dani3} S. G. Dani, \textit{On badly approximable numbers, Schmidt games and bounded orbits of flows. In Number theory and dynamical systems}, (York, 1987), London Math. Soc. Lecture Note Ser., 134, pages 69--86. Cambridge Univ. Press, 1989.
\bibitem{DS} S. G. Dani and H. Shah, \textit{Badly approximable numbers and vectors in Cantor-like sets}, Proc. Amer. Math. Soc. 140 (2012), no. 8, 2575--2587.
%\bibitem{D} L. G. P. Dirichlet, \textit{Verallgemeinerung eines satzes aus der lehre von der kettenbr\"{u}chen nebst einiger anwendungen auf die theorie der zahlen}, S.-B. Preuss., (1842) 93--95.
\bibitem{EKL} M. Einsiedler, A. Katok and E. Lindenstrauss, \textit{Invariant measures and
the set of exceptions to Littlewood's conjecture}, Ann. Math. 164 (2006), 513--560.
\bibitem{EK} M. Einsiedler and D. Kleinbock, \textit{Measure rigidity and p-adic Littlewood-type problems}, Compositio Math. 143 (2007), 689--702.
\bibitem{ET} M. Einsiedler and J. Tseng, \textit{Badly approximable systems of affine forms, fractals, and Schmidt games}, J. Reine Angew. Math 660 (2011), 83--97.
\bibitem{EKr} R. Esdahl-Schou and S. Kristensen, \textit{On badly approximable complex numbers}, Glasg. Math. J. 52(2) (2010), pp 349--355.
\bibitem{H} T. Hattori, \textit{Some Diophantine approximation inequalities and products of hyperbolic spaces}, J. Math. Soc. Japan 59 (2007), no. 1, 239--264. 
\bibitem{J} V. Jarnik, \textit{Diophantischen Approximationen und Hausdorffsches Mass}, Mat. Sb. 36 (1929), 371--382.
%\bibitem{K} D. Kleinbock, \textit{Badly approximable systems of affine forms}, J. Number Theory 79 (1999), 83--102.
\bibitem{KLW} D. Kleinbock, E. Lindenstrauss and B. Weiss, \textit{On fractal measures and diophantine approximation}, Selecta Math. (N.S.) 10 (2004), no. 4, 479--523.
%\bibitem{KM} D. Kleinbock and G.A. Margulis, \textit{Bounded orbits of nonquasiunipotent flows on homogeneous spaces}, Amer. Math. Soc. Translations 171 (1996), 141--172.\\
\bibitem{KT1} D. Kleinbock and G. Tomanov, \textit{Flows on S-arithmetic homogeneous spaces and applications 
to metric Diophantine approximation}, Max Planck Institute Preprint 2003--65 (2003).
\bibitem{KT2} D. Kleinbock, and G. Tomanov, \textit{Flows on S-arithmetic homogenous spaces and application to metric Diophantine approximation}, Comm. Math. Helv. 82 (2007) 519--581.
\bibitem{KW1} D. Kleinbock and B. Weiss, \textit{Modified Schmidt games and Diophantine approximation with weights}, Adv. Math. 223 (2010), no. 4, 1276--1298.
\bibitem{KW2} \bysame, \textit{Modified Schmidt games and a conjecture of Margulis}, preprint 2010.
\bibitem{KTV} S. Kristensen, R. Thorn and S. Velani, \textit{Diophantine approximation and badly approximable sets}, Adv. Math. 203 (2006), 132--169.
\bibitem{M} C. McMullen, \textit{Winning sets, quasiconformal maps and Diophantine approximation}, Geometric and Functional Analysis (2010), Volume 20, Issue 3, pp 726--740.
\bibitem{PV} A. Pollington and S. Velani, \textit{On simultaneously badly approximable numbers}, J. London Math. Soc. (2) 66 (2002), no. 1, 29--40.
\bibitem{Q} R. Qu\^{e}me, \textit{On Diophantine approximation by algebraic numbers of a given number field: a new generalization of Dirichlet approximation theorem}, Journ\'{e}es Arithm\'{e}tiques, 1989 (Luminy, 1989). Ast\'{e}risque No. 198-200 (1991), 273--283 (1992).
\bibitem{S1} W. M. Schmidt,  \textit{On badly approximable numbers and certain games}, Trans.
Amer. Math. Soc. 123 (1966), 178--199.
%\bibitem{S2} \bysame, \textit{Badly approximable systems of linear forms}, J. Number Theory 1(1969), 139--154.\\
\bibitem{S3} \bysame, \textit{Diophantine approximation}, Lecture Notes in Mathematics,
vol. 785, Springer-Verlag, Berlin, 1980.
\bibitem{S4} \bysame, \textit{Simultaneous approximation to algebraic numbers by elements of a number field}, Monatsh. Math. 79 (1975), 55--66.
%\bibitem{W} B. Weiss, \textit{Almost no points on a Cantor set are very well approximable}, Proc. R. Soc. Lond. A (2001) 457 949--952. 


\end{thebibliography}
\end{document}